\newtheorem{theorem}{Theorem}[section]
\newtheorem{lemma}[theorem]{Lemma}
\newtheorem{proposition}[theorem]{Proposition}
\newtheorem{remark}[theorem]{Remark}
\newtheorem{assumption}[theorem]{Assumption}
\newtheorem{example}[theorem]{Example}
\newtheorem{definition}[theorem]{Definition}
\newcommand{\clo}{\mathrm{clo}}        %closure
\newcommand{\Span}{\mathrm{span}}       % span
\renewcommand{\Im}{{\ensuremath{\mathrm{Im\,}}}} %Imaginärteil nicht alsfraktur
\renewcommand{\Re}{{\ensuremath{\mathrm{Re\,}}}} %Realteil nicht als fraktur
\providecommand{\norm}[1]{\lVert#1\rVert} %Norm
\providecommand{\abs}[1]{\lvert#1\rvert} % absolut value
\DeclareMathOperator{\Ran}{Ran}
\DeclareMathOperator{\sign}{sign}
\DeclareMathOperator{\Ker}{Ker}
\DeclareMathOperator{\Dom}{Dom}
\DeclareMathOperator{\dom}{dom}
\newcommand{\au}{\underline{a}}
\newcommand{\R}{\mathbb{R}}
\newcommand{\C}{\mathbb{C}}
\newcommand{\N}{\mathbb{N}}
\newcommand{\Me}{{\mathcal M}}
\newcommand{\He}{{\mathcal H}}
\newcommand{\Ke}{{\mathcal K}}
\newcommand{\Ge}{{\mathcal G}}
\newcommand{\Ie}{{\mathcal I}}
\newcommand{\Ee}{{\mathcal E}}
\newcommand{\De}{{\mathcal D}}
\newcommand{\We}{{\mathcal W}}
\title{Maximal--quasi--accretive Laplacians on finite metric graphs}
\begin{document}
\author{Amru Hussein \and Institut f\"{u}r Mathematik, \\ Johannes Gutenberg-Universit\"{a}t Mainz, Germany}
  \maketitle
  
 \begin{abstract}
For a finite not necessarily compact metric graph, one considers the differential expression $-\frac{d^2}{d x^2}$ on each edge. The boundary conditions at the vertices of the graph yielding quasi--m--accretive as well as m--accretive operators are completely characterized. 
 \end{abstract}

\section{Introduction} 
The subject of Laplacians on metric graphs has attracted a lot of attention in the last decades. Without going into details the author refers to  \cite{QG} and the articles \cite{PKQG1,PKWaves} and the references given therein, where also a brief overview on the history of the different branches of the development and their applications can be found. The present paper is devoted to the characterization of quasi--m--accretive Laplacians on finite metric graphs.

Recall that an operator $T$ in a Hilbert space $\He$ with scalar product $\langle \cdot, \cdot \rangle$ is called \textit{quasi--accretive} if there exists a real constant $C$ such that
\begin{eqnarray*}
\Re \langle u, T u\rangle + C \langle u,  u\rangle \geq 0, & \mbox{for all }  u \in \Dom(T),
\end{eqnarray*}
and $T$ is called \textit{accretive} if $C$ can be chosen to be zero. The operator $T$ is called \textit{(quasi--)m--accretive} if it is closed and has no proper (quasi--)accretive extension. In this sense (quasi--)m--accretive operators are maximal (quasi--)accretive. A closed operator $T$ is m--accretive if and only if it obeys the estimate
\begin{eqnarray*}
\norm{\left(T-\lambda \right)^{-1}} \leq \left(\Re \lambda \right)^{-1} & \mbox{for } \Re \lambda>0,
\end{eqnarray*}
compare \cite[Chapter V, \S 3.10]{Kato}. An operator $L$ is called \textit{(quasi--)m--dissipative} if the operator $T=-L$ is (quasi--)m--accretive. Let be $C\in\R$. Then the operator $T$ is called \textit{sectorial with vertex $C$}, if
\begin{eqnarray*}
\abs{\Im \langle u, T u\rangle} \leq \Re \langle u, T u\rangle + C \langle u,  u\rangle , & \mbox{for all }  u \in \Dom(T).
\end{eqnarray*}
An operator $T$ is called \textit{sectorial} if it is sectorial with some vertex $C$. Furthermore a sectorial operator $T$ is called \textit{m--sectorial} if it is also quasi--m--accretive. These definitions are compatible with those given in \cite[Chapter V, \S 3.10]{Kato}. Recall that these notions refer to the numerical range of a closed operator $T$ rather than to its spectrum. In general the numerical range can be larger than the convex hull of the spectrum. 

It is known that a semigroup $S(t)$ is strongly continuous and quasi--contractive if and only if its generator $L$ is quasi--m--dissipative and then $S(t)=e^{L t}$, see for example \cite[Chapter II Corollary 3.6]{EngelNagel}. Recall that a strongly continuous semigroup $S(t)$ is called \textit{quasi--contractive} if $\norm{S(t)}\leq e^{\omega t}$ for appropriate $\omega\in \R$, compare for example \cite[Chapter II Corollary 3.6]{EngelNagel}. If $\norm{S(t)}\leq 1$, for $t\geq 0$, then $S(t)$ is called \textit{contractive}.

The scope of this work is the heat conduction equation on finite metric graphs with initial conditions. Let $-\widetilde{\Delta}$ be a Laplace operator, which acts in the space of square integrable functions on a finite metric graphs. Then one considers
\begin{eqnarray}\label{Diffusioneq}
\left( \frac{\partial}{\partial t} - \widetilde{\Delta} \right) \psi(x,t) =0, & \psi(\cdot,0) = \psi_0, & \mbox{for } t\geq0.
\end{eqnarray}
The quasi--m--accretive Laplace operators $-\widetilde{\Delta}$ give exactly the quasi--m--dissipative generators $\widetilde{\Delta}$ of strongly continuous and quasi--contractive semigroups. Hence for $-\widetilde{\Delta}$ quasi--m--accretive the solution of \eqref{Diffusioneq} in the $L^2$-space is given in terms of semigroups as
\begin{eqnarray*}
\psi(\cdot,t)= e^{\widetilde{\Delta}t}\psi_0, & \mbox{where} & \norm{e^{\widetilde{\Delta}t}} \leq  e^{\omega t},
\end{eqnarray*}
for appropriate growth bound $\omega\in \R$. In particular this has applications to stochastic processes on networks. For further information on this subject especially on Brownian motions on metric graphs, see \cite{VK2007, KPS2012}.

The present work can be understood as an extension of the results obtained by V.~Kostrykin and R.~Schrader in \cite{VK2007}, where a sufficient criterion for m--accretive boundary conditions has been derived. In particular in \cite{VK2007} it has been stated that all m--accretive Laplacians can be parametrized in terms of boundary conditions. The main result of this work is the characterization of all quasi--m--accretive Laplacians on finite metric graphs in terms of boundary conditions. The proof shows that these operators are even m--sectorial, and the real parts of such m--sectorial Laplacians are self-adjoint Laplace operators. Combining the main result with results from \cite{AL2009}, one obtains a complete characterization of all m--accretive boundary conditions, too.

The subject of Laplacians on metric graphs lies - from the mathematical point of view - in the intersection of different branches of mathematics. Here it is worth mentioning spectral theory and the theory of ordinary differential equations or systems of them. One approach is to put the question of appropriate boundary conditions into the framework of extension theory. 

In the most general context of extension theory many results characterizing m--accretive extensions of non--negative closed symmetric operators and m--sectorial extensions with vertex zero have been obtained by  Y.~Arlinskii, Y.~Kovalev and \`E.~R.~Tsekanovski{\u\i}, compare the recent work \cite{Tsekanovski2010} and the references therein. In the particular context of boundary triples  V.~A.~Derkach, M.~M.~Malamud  and \`E.~R.~Tsekanovski{\u\i}, see \cite{Tsekanovski1989} and M.~M.~Malamud, see \cite{Malamud1992} obtained characterizations earlier. Some of the results proven here have been obtained already in the above mentioned works. Some are more specific due to the special structure of the problem. In general the techniques used here are very explicit, because the situation discussed is very explicit. Thus the proofs are more transparent than those given in the above mentioned works, which have been obtained in a long term process. It is not completely clear how the general theory can be applied effectively to the problem stated here. 

The content of this note is part of the author's PhD thesis, see \cite[Chapter 1]{Ich}. 

The work is organized as follows: in the subsequent section the different types of boundary conditions are discussed and the starting point of the study is described. This is followed by the formulation of the main results and the discussion of examples. The proofs are given separately in the last section.

\subsection*{Acknowledgements}
I would like to thanks Vadim Kostrykin for providing the interesting and widely ramified question, helpful discussion and pointing out the works \cite{AL2009} and \cite{Tsekanovski1992}. I am also indebted to Petr Siegl for communicating and discussing Example \ref{ex2AccQG}. I thank Delio Mugnolo for his hospitality during my visit to the research group ``Symmetriemethoden in Quantengraphen'' in Ulm in January 2012, where I presented parts of this work, as well as for helpful remarks and discussions on this subject.

\section{Basic structures} 

The notation is as in \cite[Section 2]{VK2007} and it is summarized here briefly. A graph is a $4$-tuple $\Ge = \left( V, \Ie,\Ee, \partial \right)$, where $V$ denotes the set of \textit{vertices}, $\Ie$ the set of \textit{internal edges} and $\Ee$ the set of \textit{external edges}, where the set $\Ee \cup \Ie$ is summed up in the notion \textit{edges}. The \textit{boundary map} $\partial$ assigns to each internal edge $i\in \Ie$ an ordered pair of vertices $\partial (i)= (v_1,v_2)\in V\times V$, where $v_1$ is called its \textit{initial vertex} and $v_2$ its \textit{terminal vertex}. Each external edge $e\in \Ee$ is mapped by $\partial$ onto a single, its initial, vertex. A graph is called finite if $\abs{V}+\abs{\Ie}+\abs{\Ee}<\infty$ and a finite graph is \textit{compact} if $\Ee=\emptyset$.

The graph is endowed with the following metric structure. Each internal edge $i\in \Ie$ is associated with an interval $[0,a_i]$ with $a_i>0$, such that its initial vertex corresponds to $0$ and its terminal vertex to $a_i$. Each external edge $e\in \Ee$ is associated to the half line $[0,\infty)$, such that $\partial(e)$ corresponds to $0$. The numbers $a_i$ are called \textit{lengths} of the internal edges $i\in \Ie$ and they are summed up into the vector $\au=\{a_i\}_{i\in \Ie}\in \R_+^{\abs{\Ie}}$. The 2-tuple consisting of a finite graph endowed with a metric structure is called a \textit{metric graph} $(\Ge,\au)$. The metric on $(\Ge,\au)$ is defined via minimal path lengths. 

Given a finite metric graph $(\Ge,\au)$ one considers the Hilbert space
\begin{eqnarray}
 \He \equiv \He(\Ee,\Ie,\au)= \He_{\Ee} \oplus \He_{\Ie}, & \displaystyle{\He_{\Ee}= \bigoplus_{e\in\Ee} \He_e,} & \He_{\Ie}= \bigoplus_{i\in\Ie} \He_i,
\end{eqnarray}     
where $\He_j= L^2(I_j)$ with 
$$I_j= \begin{cases} [0,a_j], & \mbox{if} \ j\in \Ie, \\ [0,\infty), &\mbox{if} \ j\in \Ee.  \end{cases}$$

By $\De_j$ with $j\in \Ee \cup \Ie$ denote the set of all $\psi_j\in \He_j$ such that $\psi_j$ and its derivative $\psi_j^{\prime}$ are absolutely continuous and $\psi_j^{\prime\prime}$ is square integrable. Let $\De_j^0$ denote the set of all elements $\psi_j\in \De_j$ with
\begin{eqnarray*}
\psi_j(0)=0, &  \psi^{\prime}(0)=0, & \mbox{for}  \ j\in \Ee,    \\
\psi_j(0)=0, &  \psi^{\prime}(0)=0, & \psi_j(a_j)=0, \  \psi^{\prime}(a_j)=0, \ \mbox{for} \ j\in \Ie. 
\end{eqnarray*}
Let $\Delta$ be the differential operator 
\begin{eqnarray*}
\left( \Delta \psi\right)_j (x) = \frac{d^2}{dx}\psi_j(x), & j\in \Ee\cup \Ie, & x\in I_j
\end{eqnarray*}
with domain $\De$ and $\Delta^0$ its restriction on the domain $\De^0$, where 
\begin{eqnarray*}
\De= \bigoplus_{j\in \Ee \cup \Ie} \De_j, & \mbox{and} &  \De^0= \bigoplus_{j\in \Ee \cup \Ie} \De_j^0.
\end{eqnarray*}

One can check easily that the operator $\Delta^0$ is a closed symmetric operator with deficiency indices $(d,d)$, where $d=\abs{\Ee}+2\abs{\Ie}$, and its Hilbert space adjoint is $(\Delta^0)^*=\Delta$. The aim of this note is to discuss extensions $-\widetilde{\Delta}$ of the Laplacian $-\Delta^0$ with
$$-\Delta^0\subset-\widetilde{\Delta}\subset -\Delta,$$
that is lying between the minimal and the maximal operator. In the context of extension theory extensions with this property are called quasi--self--adjoint, see for example \cite{Tsekanovski2010}. In the situation considered here these extensions can be discussed in terms of boundary conditions. For this purpose one defines the auxiliary Hilbert space
\begin{equation*}
\Ke \equiv \Ke(\Ee, \Ie) = \Ke_{\Ee}  \oplus \Ke_{\Ie}^- \oplus \Ke_{\Ie}^+
\end{equation*}
with $\Ke_{\Ee} \cong \C^{\abs{\Ee}}$ and $\Ke_{\Ie}^{(\pm)} \cong \C^{\abs{\Ie}}$. For $\psi\in \De$ one defines the vectors of boundary values
\begin{eqnarray*}
\underline{\psi}= \begin{bmatrix} \{\psi_{e}(0)\}_{e\in\Ee} \\ 
\{\psi_{i}(0)\}_{i\in\Ie} \\
\{\psi_{i}(a_i)\}_{i\in\Ie}
\end{bmatrix} &\mbox{and} & 
\underline{\psi^{\prime}}= \begin{bmatrix} \{\psi_{e}^{\prime}(0)\}_{e\in\Ee} \\ 
\{\psi_{i}^{\prime}(0)\}_{i\in\Ie} \\
\{-\psi_{i}^{\prime}(a_i)\}_{i\in\Ie}
\end{bmatrix}.
\end{eqnarray*}
One sets 
\begin{equation*}
[\psi]:= \underline{\psi} \oplus \underline{\psi^{\prime}} \in \Ke \oplus \Ke
\end{equation*}
and denotes by the redoubled space $\Ke^2= \Ke \oplus \Ke$ the \textit{space of boundary values}. 

\section{Boundary conditions}
Let $A$ and $B$ be linear maps in $\Ke$. By $(A, \, B)$ one denotes the linear map from $\Ke^2=\Ke \oplus \Ke$ to $\Ke$ defined by 
$$ (A, \, B) (\chi_1 \oplus \chi_2) = A\chi_1 + B \chi_2, $$   
for $\chi_1,\chi_2\in\Ke$. Set 
\begin{equation}\label{MeAccQG}
\Me(A,B):=\Ker (A, \, B).
\end{equation}
With any subspace $\Me\subset \Ke^2$ one can associate an extension $-\Delta(\Me)$ of $-\Delta^0$, which is the restriction of $-\Delta$ to the domain
\begin{equation*}
\Dom(-\Delta(\Me))= \{ \psi \in \De \mid [\psi] \in \Me   \}.
\end{equation*}
If $\Me$ is of the form given in \eqref{MeAccQG} an equivalent description is that $\Dom (-\Delta(\Me))$ consists of all functions $\psi\in \De$ that satisfy the boundary conditions
\begin{equation*}
A \underline{\psi} + B \underline{\psi}^{\prime} = 0.
\end{equation*}
In this case one also writes equivalently $-\Delta(\Me)=-\Delta(A,B)$.

In \cite[Theorem 2.3]{VK2007} it has been shown that each m--accretive extension of $-\Delta^0$ can be represented as $-\Delta(A,B)$, for some $A,B$ satisfying the following necessary 
\begin{assumption}[{\cite[Assumption 2.1]{VK2007}}]\label{assumption2}%
Let $A$ and $B$ be maps in $\Ke$. Assume that the map $(A, \, B ) \colon \Ke^2 \rightarrow \Ke$ is surjective, that is, it has maximal rank equal to $d=\abs{\Ee} + 2\abs{\Ie}$. 
\end{assumption}
The statement of \cite[Theorem 2.3]{VK2007} admits a straight forward generalization to quasi--m--accretive Laplacians.  
\begin{proposition}\label{prop1AccQG}
Any quasi--m--accretive extension of $-\Delta^0$ can be represented by $-\Delta(A,B)$, for some $A$ and $B$ satisfying Assumption~\ref{assumption2}.
\end{proposition}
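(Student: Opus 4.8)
The plan is to reduce the claim to the m--accretive case, which is exactly \cite[Theorem 2.3]{VK2007}, by shifting with a real constant. Suppose $-\widetilde{\Delta}$ is quasi--m--accretive; since it is in particular quasi--accretive, there is a constant $C\in\R$ with $\Re\langle u,-\widetilde{\Delta}u\rangle+C\langle u,u\rangle\geq0$ for all $u\in\Dom(-\widetilde{\Delta})$. First I would check that the shifted operator $-\widetilde{\Delta}+C$ is m--accretive: it is closed as a bounded perturbation of a closed operator, the estimate above says precisely that it is accretive, and it is maximal accretive because any proper accretive extension $R\supsetneq-\widetilde{\Delta}+C$ would make $R-C$ a proper quasi--accretive extension of $-\widetilde{\Delta}$, contradicting quasi--m--accretivity. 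I would then note that $-\widetilde{\Delta}+C$ is an extension of the shifted minimal operator $-\Delta^0+C$ and is governed by the differential expression $-\tfrac{d^2}{dx^2}+C$ rather than by $-\tfrac{d^2}{dx^2}$.

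The key observation that makes the generalization ``straightforward'' is that adding the real constant $C$ does not change the boundary form. Indeed, for $\psi,\phi\in\De$ the two $C$--terms cancel in
\begin{equation*}
\langle(-\Delta+C)\psi,\phi\rangle-\langle\psi,(-\Delta+C)\phi\rangle=\langle-\Delta\psi,\phi\rangle-\langle\psi,-\Delta\phi\rangle ,
\end{equation*}
because $C\in\R$, so the sesquilinear boundary form on $\Ke^2$ obtained from integration by parts is identical for $-\tfrac{d^2}{dx^2}$ and for $-\tfrac{d^2}{dx^2}+C$. Hence the whole boundary--value apparatus---the spaces $\Ke,\Ke^2$, the trace map $[\cdot]$, and the assignment $\Me\mapsto-\Delta(\Me)$---is the same for both expressions, and the only quantity that gets shifted is the definiteness input $\Re\langle\psi,(-\Delta+C)\psi\rangle=\Re\langle\psi,-\Delta\psi\rangle+C\langle\psi,\psi\rangle$, which is exactly the accretivity used in \cite[Theorem 2.3]{VK2007}. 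I would therefore run that proof for the symmetric operator $-\Delta^0+C$ with only the lower--order constant in the definiteness condition and a shift of the spectral parameter by $C$ changed, obtaining maps $A,B$ satisfying Assumption~\ref{assumption2} such that $-\widetilde{\Delta}+C$ equals the expression $-\tfrac{d^2}{dx^2}+C$ restricted by the boundary conditions $A\underline{\psi}+B\underline{\psi}^{\prime}=0$. Subtracting $C$ leaves the domain, hence the boundary conditions $A,B$, untouched and gives $-\widetilde{\Delta}=-\Delta(A,B)$, as required.

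The main obstacle will be precisely that $-\widetilde{\Delta}+C$ is not itself a Laplacian, so \cite[Theorem 2.3]{VK2007} cannot be quoted as a black box; one has to inspect its proof and confirm that the constant enters only through the definiteness condition and never through the boundary form, which is what the cancellation above guarantees. Two auxiliary points will also need care. First, one must know that a quasi--m--accretive extension of $-\Delta^0$ is automatically intermediate, $-\widetilde{\Delta}\subset-\Delta$, so that the representation by boundary values is available at all; for maximal accretive extensions of a symmetric operator with equal deficiency indices this is part of what the cited argument delivers. Second, the surjectivity of $(A,\,B)$ in Assumption~\ref{assumption2}, i.e. the maximal rank $d=\abs{\Ee}+2\abs{\Ie}$, is equivalent to $\dim\Me=d$: the accretivity inequality bounds this dimension from above, with the lower--order term $C\langle\psi,\psi\rangle$ not affecting the scaling analysis of the boundary form, while maximality bounds it from below. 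I expect this balance, rather than the shift itself, to be where the real work lies.
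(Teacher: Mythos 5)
Your proposal is correct and is essentially the argument the paper has in mind: the paper gives no separate proof of Proposition~\ref{prop1AccQG}, asserting it only as a ``straight forward generalization'' of \cite[Theorem 2.3]{VK2007}, and your shift $-\widetilde{\Delta}\mapsto-\widetilde{\Delta}+C$ together with the observation that a real constant cancels in the boundary form --- so that the spaces $\Ke^2$, the trace map $[\cdot]$ and the parametrization by subspaces $\Me$ are identical for $-\tfrac{d^2}{dx^2}$ and $-\tfrac{d^2}{dx^2}+C$ --- is exactly that generalization made explicit. The two points you flag for care, namely that the maximal extension is automatically intermediate ($-\widetilde{\Delta}\subset-\Delta$) and that maximality forces $\dim\Me=d$, which is equivalent to the surjectivity in Assumption~\ref{assumption2}, are indeed the only places where the internals of the cited proof matter, and neither is affected by the shift.
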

For the discussion of boundary conditions it is important to note that $A,B$ and $A^{\prime},B^{\prime}$ define the same operator $-\Delta(A,B)=-\Delta(A^{\prime},B^{\prime})$ if and only if the corresponding subspaces $\Me(A,B)$ and $\Me(A^{\prime}, B^{\prime})$ agree. This gives rise to  
\begin{definition}[{\cite[Definition 2.2]{VK2007}}]\label{CACB}
Boundary conditions defined by $A,B$ and $A^{\prime},B^{\prime}$ satisfying Assumption~\ref{assumption2} are called equivalent if the corresponding subspaces $\Me(A,B)$ and $\Me(A^{\prime}, B^{\prime})$ agree. 
\end{definition}
 
Note that boundary conditions defined by $A,B$ and $A^{\prime},B^{\prime}$ satisfying Assumption~\ref{assumption2} are equivalent if and only if there exists an invertible operator $C$ in $\Ke$ such that $A^{\prime}=CA$ and $B^{\prime}=CB$, compare \cite{VK2007}.

The question if an operator is quasi--accretive or even sectorial is closely related to the sesquilinear form defined by the operator, and here one defines the sesquilinear form $\delta_{\Me}$ by   
\begin{eqnarray*}
\delta_{\Me}[\psi,\varphi]:=\langle \psi, -\Delta(\Me) \varphi \rangle_{\He},  & \psi, \varphi\in \mbox{Dom}(\Delta(\Me)).
\end{eqnarray*}
Integration by parts gives a more practical representation for the associated \\ quadratic form 
\begin{align}\label{eq:qfAccQG}
\delta_{\Me}[\psi] :=\delta_{\Me}[\psi,\psi]= \int_{\Ge} \abs{ \psi^{\prime}}^2 + \langle \underline{\psi},\underline{\psi}^{\prime}\rangle_{\Ke},
\end{align}
where
\begin{align*}
\int_{\Ge}  \abs{ \psi^{\prime}}^2 = \sum_{i\in \Ie} \int_0^{a_i} \abs{\psi^{\prime}(x_i)}^2 dx_i + \sum_{e\in \Ee} \int_0^{\infty} \abs{\psi^{\prime}(x_e)}^2 dx_e.
\end{align*}

It turns out that in the class of boundary conditions which satisfy Assumption~\ref{assumption2}, there are two types of boundary conditions. These are related to a certain block-decomposition of the matrices $A$ and $B$, where $\Me=\Me(A,B)$. Following \cite[Section 3.1]{PKQG1} one introduces two decompositions of $\Ke$. Denote by $Q$ the orthogonal projector onto the subspace $(\Ran B)^{\perp}$, by $Q^{\perp}= \mathds{1}-Q$ the orthogonal projector onto $\Ran B$ and by $P$ the orthogonal projector onto $\Ker B$ and by $P^{\perp}= \mathds{1}-P$ the orthogonal projector onto $(\Ker B)^{\perp}$. With this one is able to write the map $(A ,\, B)$ as the block-operator matrix 
\begin{align}\label{eq: Decomposition1}
(A, \, B ) = 
  \begin{pmatrix}
     Q^{\perp} A P^{\perp} &  Q^{\perp} A P &  Q^{\perp} B P^{\perp} & 0 \\
     Q A P^{\perp} &   Q A P   & 0 & 0 \\
  \end{pmatrix}.
\end{align}
The domains of $A$ and $B$ have the orthogonal decomposition $\Ke=\Ran P \oplus \Ran P^{\perp}$ and the target set of both $A$ and $B$ is $\Ke=\Ran Q \oplus \Ran Q^{\perp}$. From this one sees that the rank condition in Assumption~\ref{assumption2} is equivalent to the fact that \\ $QA=\begin{pmatrix} QAP^{\perp} & Q A P \end{pmatrix}$ considered as a map from $\Ke$ to $\Ran Q$ is surjective. 

As remarked above the choice of the matrices $A$ and $B$ is not unique. Similar to the case of self-adjoint Laplace operators on metric graphs one can parametrize $\Me=\Me(A,B)$ at least in an ``almost unique'' way. Notice that from the definitions of $P$ and $Q$ it follows that $\dim \Ran P = \dim \Ran Q$. Therefore there exists an isomorphism 
$$U\colon \Ran Q \rightarrow \Ran P.$$ Multiplying both $A$ and $B$ from the left with the matrix 
\begin{equation*}
C=\begin{pmatrix} (Q^{\perp} B P^{\perp})^{-1} & 0 \\ 0 & U \end{pmatrix},
\end{equation*}
gives $A^{\prime}= CA$ and $B^{\prime}=CB$. These $A^{\prime}, B^{\prime}$ define equivalent boundary conditions, in the sense of Definition~\ref{CACB}, that is $\Me(A^{\prime},B^{\prime})=\Me(A,B)$, but one has achieved the normalization $B^{\prime}=P^{\perp}$. This gives the block-operator matrix 
\begin{align}\label{eq: Decomposition}
(A^{\prime}, \, B^{\prime} ) = 
  \begin{pmatrix}
     P^{\perp} A^{\prime} P^{\perp} &  P^{\perp} A^{\prime} P & P^{\perp} & 0 \\
     P A^{\prime} P^{\perp} &  P A^{\prime} P  & 0 & 0 \\
  \end{pmatrix}
\end{align}
One observes that the boundary conditions separate into 
\begin{eqnarray*}
PA^{\prime} \underline{u}=0 & \mbox{and} & P^{\perp}A^{\prime} \underline{u} + P^{\perp} \underline{u}^{\prime} =0.
\end{eqnarray*}
Similar considerations have been used in the discussion of boundary conditions that define self-adjoint Laplace operators as well as in the analysis of the corresponding quadratic forms, see \cite{PKQG1}. This motivates the notation 
\begin{equation}\label{KuchmentL}
L:= (Q^{\perp}BP^{\perp})^{-1} Q^{\perp}A P^{\perp}
\end{equation}
or equivalently $L=P^{\perp} A^{\prime} P^{\perp}$. After this preparatory work one can formulate also the sufficient assumption on boundary conditions to define quasi--m--accretive realisations of $-\Delta^0$. In addition to the necessary Assumption~\ref{assumption2} one needs

\begin{assumption}\label{assumptionA}
Let $A$ and $B$ be maps in $\Ke$. Denote by $Q$ be the orthogonal projector in $\Ke$ onto $(\Ran B)^{\perp}$ and by $P$ the orthogonal projector in $\Ke$ onto $\Ker B$. Assume that $$QAP^{\perp} = 0.$$
\end{assumption}

\begin{remark}
Assuming both Assumption~\ref{assumption2} and Assumption~\ref{assumptionA} it follows that $QAP$, as a map from $\Ran P$ to $\Ran Q$ is invertible. With this the block decomposition given in \eqref{eq: Decomposition} can be simplified to
\begin{align}\label{eq: Decomposition2}
(A^{\prime}, \, B^{\prime} ) = 
  \begin{pmatrix}
     P^{\perp} A^{\prime} P^{\perp} &  P^{\perp} A^{\prime} P & P^{\perp} & 0 \\
     0 &   P  & 0 & 0 
  \end{pmatrix}.
\end{align}
Surprisingly the block $P^{\perp} A^{\prime} P$ has no influence on the numerical range of the operator $-\Delta(A^{\prime}, B^{\prime} )$, since $P\underline{\psi}=0$ for all $\psi\in \Dom(-\Delta(A^{\prime}, B^{\prime} ))$. Hence it is even sufficient to consider 
\begin{align*}
(A^{\prime\prime}, \, B^{\prime\prime} ) = 
  \begin{pmatrix}
     L &  0 & P^{\perp} & 0 \\
     0 &   P  & 0 & 0 
  \end{pmatrix}.
\end{align*}
\end{remark}

\section{Results and examples}
The main result of this note is 
 \begin{theorem}\label{th1AccQG}
\ \ \ \  \   \  \ \  \ \ \ \ \
\begin{enumerate}
\item The operator $-\Delta(A,B)$ is quasi--m--accretive if and only if $A,B$ satisfy both Assumption~\ref{assumption2} and Assumption~\ref{assumptionA}. 
\item All quasi--m--accretive realizations $-\Delta(A,B)$ are even m--sectorial. 
\end{enumerate}
\end{theorem}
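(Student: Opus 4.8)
The plan is to prove the two directions of (1) by entirely different means, and to obtain (2) as a byproduct: the sufficiency together with m--sectoriality I would get from the quadratic form \eqref{eq:qfAccQG} and Kato's first representation theorem, while the necessity of Assumption~\ref{assumptionA} I would get from an explicit singular test sequence.

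\emph{Sufficiency and m--sectoriality.} Assuming both Assumption~\ref{assumption2} and Assumption~\ref{assumptionA}, I would first pass to the normalized block form \eqref{eq: Decomposition2}, so that the boundary conditions split into the \emph{essential} condition $P\underline{\psi}=0$ and the \emph{natural} condition $P^{\perp}\underline{\psi}^{\prime}=-L\underline{\psi}$, with $L$ as in \eqref{KuchmentL}. Inserting these into \eqref{eq:qfAccQG} and using $\underline{\psi}\in\Ran P^{\perp}$ yields the closed expression
\begin{equation*}
\delta_{\Me}[\psi]=\int_{\Ge}\abs{\psi^{\prime}}^2-\langle\underline{\psi},L\underline{\psi}\rangle_{\Ke}.
\end{equation*}
I would then take this same formula as the definition of $\delta_{\Me}$ on the larger form domain $\fD=\{\psi\in\bigoplus_j H^1(I_j):P\underline{\psi}=0\}$, on which only the essential condition is imposed and the derivative traces $\underline{\psi}^{\prime}$ no longer appear. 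The decisive ingredient is the one--dimensional trace inequality $\abs{\underline{\psi}}_{\Ke}^2\le\varepsilon\int_{\Ge}\abs{\psi^{\prime}}^2+C_{\varepsilon}\norm{\psi}^2$ for $\psi\in\fD$: it shows that $\psi\mapsto-\langle\underline{\psi},L\underline{\psi}\rangle$ is form--bounded relative to $\int_{\Ge}\abs{\psi^{\prime}}^2$ with relative bound $0$, whence $\delta_{\Me}$ is sectorial and closed on $\fD$. Kato's representation theorem then yields an m--sectorial operator, and an integration by parts combined with testing against functions supported in the interior of single edges identifies its domain as $\{\psi\in\De:P\underline{\psi}=0,\ P^{\perp}\underline{\psi}^{\prime}=-L\underline{\psi}\}$ and the operator itself as $-\Delta(A,B)$. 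Since m--sectorial operators are quasi--m--accretive, this proves the ``if'' direction of (1) and, at the same time, contributes to (2).

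\emph{Necessity of Assumption~\ref{assumption2}.} If $-\Delta(A,B)$ is quasi--m--accretive, Proposition~\ref{prop1AccQG} supplies $A^{\prime\prime},B^{\prime\prime}$ of maximal rank with $\Me(A,B)=\Me(A^{\prime\prime},B^{\prime\prime})$; comparing the dimensions of the two kernels forces $(A,\,B)$ to have rank $d$, which is Assumption~\ref{assumption2}.

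\emph{Necessity of Assumption~\ref{assumptionA}.} This is the step I expect to be the crux, and I would argue by contraposition. Assume Assumption~\ref{assumption2} but $QAP^{\perp}\neq0$. Since $QA$ maps onto $\Ran Q$ and $\dim\Ran P=\dim\Ran Q$, a short linear--algebra argument (distinguishing whether $QAP$ is invertible) shows that $\Ker(QA)$ cannot lie inside $\Ran P^{\perp}$, producing a vector $w$ with $QAw=0$ but $w_0:=Pw\neq0$; thus $w$ is an admissible value--trace with a nonzero component in $\Ran P=\Ker B$. Reading off \eqref{eq: Decomposition1}, the remaining condition only fixes $P^{\perp}\underline{\psi}^{\prime}$ in terms of $\underline{\psi}$, while $P\underline{\psi}^{\prime}$ stays completely free. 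The key analytic observation is that a derivative trace costs nothing in the form topology: a rescaled spike $\psi(x)=b\,\delta\,g(x/\delta)$ with $g(0)=0,\ g^{\prime}(0)=1$ realizes $\underline{\psi}=0$, $\underline{\psi}^{\prime}=b$ with $\int_{\Ge}\abs{\psi^{\prime}}^2=O(\abs{b}^2\delta)$ and $\norm{\psi}^2=O(\abs{b}^2\delta^3)$. Superposing such a spike, carrying $P\underline{\psi}^{\prime}=-s\,w_0$, onto a fixed bounded profile realizing the value--trace $w$ produces $\psi_{s,\delta}\in\Dom(-\Delta(A,B))$ for which, at fixed $s$, the spike contributes nothing in the limit $\delta\to0$, so that
\begin{equation*}
\limsup_{\delta\to0}\Re\delta_{\Me}[\psi_{s,\delta}]\le C_0-s\abs{w_0}^2,\qquad\limsup_{\delta\to0}\norm{\psi_{s,\delta}}^2\le C_1,
\end{equation*}
with $C_0,C_1$ independent of $s$. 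Letting now $s\to\infty$ drives $\Re\delta_{\Me}$ to $-\infty$ along a sequence of bounded norm, so no constant $C$ can make $\Re\delta_{\Me}+C\norm{\cdot}^2\ge0$: the operator is not even quasi--accretive. This contradiction shows that quasi--m--accretivity forces Assumption~\ref{assumptionA}, completing (1); and (2) follows, since a quasi--m--accretive realization then satisfies both assumptions and is therefore m--sectorial by the first part.
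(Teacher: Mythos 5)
Your proposal is correct, and the sufficiency half follows essentially the same route as the paper: normalize to the block form \eqref{eq: Decomposition2}, control the boundary term $-\langle\underline{\psi},L\underline{\psi}\rangle$ by the trace estimate of Lemma~\ref{lemma1AccQG} so that the form is sectorial and closed on $\{\psi\in\We\mid P\underline{\psi}=0\}$, and invoke the first representation theorem (your explicit identification of the associated operator with $-\Delta(A,B)$ via integration by parts is in fact spelled out more carefully than in the paper, which at that point leans somewhat opaquely on Proposition~\ref{prop1AccQG} for maximality). The genuine difference lies in the necessity of Assumption~\ref{assumptionA}. You and the paper both begin with the same dimension count producing a trace vector $\alpha$ (your $w$) with $QA\alpha=0$ but $P\alpha\neq 0$, exploiting that the component $P\underline{\psi}^{\prime}$ of the derivative trace is unconstrained. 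But where the paper builds a one-parameter family $u_n$ from matrix exponentials $e^{H_nx}$ with $H_n$ containing a block $nP$, glued by interpolating polynomials to compactly supported functions, and must then estimate $\norm{H_n}$, tune the cut points $a_n,b_n,c_n$ against $e^{2\norm{H_n}}$, and reduce to star graphs by a localisation argument, you superpose a fixed profile carrying the value trace $w$ with a rescaled spike $b\,\delta\,g(x/\delta)$ that injects an arbitrarily large derivative trace $-s\,Pw$ at asymptotically zero cost in both the Dirichlet integral ($O(s^2\delta)$) and the $L^2$ norm ($O(s^2\delta^3)$); a diagonal choice $\delta=\delta(s)$ then drives $\Re\delta_{\Me}$ to $-\infty$ along a norm-bounded sequence. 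Your construction is more elementary, avoids all operator-norm estimates on matrix exponentials, and is automatically local, so no reduction to star graphs is needed; the paper's construction has the merit of exhibiting, in closed form, a single explicit family solving the full matrix boundary-value problem. Both arguments are sound and yield the same conclusion that the numerical range is unbounded below, hence the operator is not even quasi--accretive.
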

Note that for any positive closed symmetric operator there exists an extension that is m--accretive, but not sectorial with vertex zero, see \cite[Theorem 1]{Tsekanovski1992}. In the particular situation considered here Theorem~\ref{th1AccQG} exhibits that there is at least some vertex for which the  m--accretive operator $-\Delta(A,B)$ is sectorial.  

Claim (2) of Theorem~\ref{th1AccQG} is of importance because any m--sectorial operator \\ $-\Delta(A,B)$ has a well--defined real part $\Re(-\Delta(A,B))$. This is the unique self--adjoint operator that is associated to the real part of the closure of the form defined by the m-sectorial operator $-\Delta(A,B)$, compare \cite[Chapter VI, \S 3.1]{Kato}. It turns out that $\Re(-\Delta(A,B))$ is itself a Laplace operator. Together with results from \cite{AL2009} Theorem~\ref{th1AccQG} yields also a characterization of all m--accretive boundary conditions.

\begin{theorem}\label{th2AccQG}
\begin{enumerate}
\item Let $A,B$ satisfy Assumption~\ref{assumption2} and Assumption~\ref{assumptionA}. Then 
\begin{eqnarray*}
\Re (-\Delta(A,B))= -\Delta(A^{\prime},B^{\prime}),  
\end{eqnarray*}
is self--adjoint with
\begin{eqnarray*}
A^{\prime}= P + \Re L  & \mbox{and} & B^{\prime}=P^{\perp}, 
\end{eqnarray*}
where $P$ denotes the orthogonal projector onto $\Ker B$, $P^{\perp}= \mathds{1}-P$ and $L$ is given by equation~\eqref{KuchmentL}. 
\item The operator $-\Delta(A,B)$ is m--accretive if and only if the following two conditions are satisfied 
\begin{enumerate} 
\item Assumption~\ref{assumption2} is fulfilled and
\item $\Re (AB^*) + B M_0(\au) B^*  \leq 0$ with  $M_0(\au)= \begin{pmatrix} 0 & 0 & 0 \\ 0 & -\frac{1}{\au} & \frac{1}{\au}  \\ 0 & \frac{1}{\au} & -\frac{1}{\au} \end{pmatrix},$  \\ where $\frac{1}{\au}$ is the $\abs{\Ie}\times \abs{\Ie}$--matrix with entries $\left\{\frac{1}{\au}\right\}_{ij}= \delta_{ij} a_i^{-1}$, $i,j\in \Ie$.
\end{enumerate}
\end{enumerate}
\end{theorem}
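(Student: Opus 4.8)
The plan is to derive the whole of Theorem~\ref{th2AccQG} from Theorem~\ref{th1AccQG} together with an explicit computation of the quadratic form \eqref{eq:qfAccQG} and its real part. The guiding principle is that an m--sectorial operator is completely encoded by its form, that passing to the real part of the form amounts to replacing $L$ by $\Re L$ in the boundary conditions, and that $-\Delta(A,B)$ is m--accretive precisely when it is \emph{both} quasi--m--accretive (governed by Theorem~\ref{th1AccQG}) and accretive (governed by non--negativity of the self--adjoint real part produced in part~(1)).

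For part~(1) I would first invoke Theorem~\ref{th1AccQG} to know that $-\Delta(A,B)$ is m--sectorial, so that by \cite[Chapter VI]{Kato} the form $\delta_{\Me}$ is closable, its closure is sectorial, and $\Re(-\Delta(A,B))$ is the self--adjoint operator associated with the closed semibounded form $\Re\overline{\delta_{\Me}}$. On the operator domain the normalised boundary conditions behind \eqref{KuchmentL} read $P\underline{\psi}=0$ and $P^{\perp}\underline{\psi}^{\prime}=-L\underline{\psi}$; substituting these into \eqref{eq:qfAccQG} collapses the boundary term to $\langle\underline{\psi},\underline{\psi}^{\prime}\rangle_{\Ke}=-\langle\underline{\psi},L\underline{\psi}\rangle_{\Ke}$, an expression depending only on the trace $\underline{\psi}$ and hence continuous on the form domain $\{\psi:\psi^{\prime}\in\He,\ P\underline{\psi}=0\}$. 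Taking real parts yields
\begin{eqnarray*}
\Re\overline{\delta_{\Me}}[\psi]=\int_{\Ge}\abs{\psi^{\prime}}^2-\langle\underline{\psi},(\Re L)\underline{\psi}\rangle_{\Ke},
\end{eqnarray*}
which is exactly the form of the Laplacian with boundary data $A^{\prime}=P+\Re L$, $B^{\prime}=P^{\perp}$. Since $A^{\prime}(B^{\prime})^{*}=\Re L$ is Hermitian and $(A^{\prime},B^{\prime})$ has maximal rank, $-\Delta(A^{\prime},B^{\prime})$ is self--adjoint by the criterion for self--adjoint Laplacians of \cite{PKQG1}; comparing forms then identifies it with $\Re(-\Delta(A,B))$.

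For part~(2) I would use that $-\Delta(A,B)$ is m--accretive iff it is accretive and the range of $-\Delta(A,B)+\mu$ is all of $\He$ for some $\mu>0$. The surjectivity is supplied by quasi--m--accretivity, hence by Assumptions~\ref{assumption2} and~\ref{assumptionA} through Theorem~\ref{th1AccQG}, while accretivity is the statement $\Re\overline{\delta_{\Me}}\geq 0$, i.e.\ non--negativity of the self--adjoint operator $\Re(-\Delta(A,B))=-\Delta(P+\Re L,P^{\perp})$ from part~(1). Minimising $\int_{\Ge}\abs{\psi^{\prime}}^2$ over all $\psi$ with prescribed trace $\underline{\psi}\in\Ran P^{\perp}$ is solved by edgewise affine (harmonic) functions on the internal edges, and a direct computation gives
\begin{eqnarray*}
\langle\underline{\psi},M_0(\au)\underline{\psi}\rangle_{\Ke}=-\sum_{i\in\Ie}a_i^{-1}\abs{\psi_i(a_i)-\psi_i(0)}^2,
\end{eqnarray*}
so $M_0(\au)$ is the Dirichlet--to--Neumann map of the harmonic problem and the minimal energy equals $-\langle\underline{\psi},M_0(\au)\underline{\psi}\rangle_{\Ke}$ (external edges contributing an infimum of zero). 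Thus $\Re\overline{\delta_{\Me}}\geq0$ is equivalent to $\Re L+P^{\perp}M_0(\au)P^{\perp}\leq 0$ on $\Ran P^{\perp}$, which is the non--negativity criterion for self--adjoint Laplacians of \cite{AL2009}. It remains to transport this to $A,B$: writing $b:=Q^{\perp}BP^{\perp}$ (invertible) and $L=b^{-1}Q^{\perp}AP^{\perp}$, one checks that $\Re(AB^{*})+BM_0(\au)B^{*}$ has $(\Ran Q,\Ran Q)$--block zero and $(\Ran Q^{\perp},\Ran Q^{\perp})$--block equal to $b\bigl(\Re L+P^{\perp}M_0(\au)P^{\perp}\bigr)b^{*}$, its off--diagonal blocks being built from $QAP^{\perp}$; moreover this quantity transforms by the congruence $X\mapsto CXC^{*}$ under $A\mapsto CA$, $B\mapsto CB$, so its sign is intrinsic to the boundary condition.

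The step I expect to be the main obstacle is closing the logical loop, namely showing that Assumption~\ref{assumption2} together with $\Re(AB^{*})+BM_0(\au)B^{*}\leq 0$ \emph{already} forces Assumption~\ref{assumptionA}, so that the two conditions listed in the theorem suffice without assuming~\ref{assumptionA} separately. This is the off--diagonal argument: in the $Q^{\perp}/Q$ block decomposition the $(\Ran Q,\Ran Q)$--block vanishes identically, and a negative semidefinite self--adjoint block matrix with a zero diagonal block must have the adjacent off--diagonal blocks equal to zero; this gives $QAP^{\perp}b^{*}=0$, and invertibility of $b$ yields $QAP^{\perp}=0$, which is exactly Assumption~\ref{assumptionA}. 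Once this is secured, the surviving diagonal block reduces to $b(\Re L+P^{\perp}M_0(\au)P^{\perp})b^{*}\leq 0$, matching the accretivity criterion, and the remaining points are technical: that the Dirichlet--energy infimum over the form domain is realised by the harmonic functions (with each external half--line contributing infimum zero, consistent with the vanishing external block of $M_0(\au)$), and that passing to the form closure does not enlarge the trace space beyond $P\underline{\psi}=0$.
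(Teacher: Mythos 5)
Your proposal is correct and follows the same skeleton as the paper's proof: part (1) is obtained by identifying $\Re$ of the closed sectorial form with $\bar{\delta}_{P,\Re L}$ and matching it against Kuchment's classification of forms of self--adjoint Laplacians (this is the paper's Lemma~\ref{lemma3AccQG}), and part (2) reduces m--accretivity to quasi--m--accretivity plus non--negativity of the self--adjoint real part, with the decisive step --- that a negative semidefinite self--adjoint block matrix with a vanishing diagonal block must have vanishing adjacent off--diagonal blocks, forcing $QAP^{\perp}=0$ --- being exactly the paper's Lemma~\ref{QWB}. The two genuine (though minor) deviations: you re--derive the non--negativity criterion $\Re L+P^{\perp}M_0(\au)P^{\perp}\leq 0$ from scratch by minimising the Dirichlet energy over prescribed traces and recognising $M_0(\au)$ as the Dirichlet--to--Neumann map of the edgewise harmonic problem, whereas the paper simply cites \cite[Theorem 1]{AL2009}; this makes your argument more self--contained at the cost of having to justify that the infimum on external edges is $0$ and that the form closure has trace space exactly $\Ran P^{\perp}$ (the latter is the paper's Lemma~\ref{lemma2AccQG}, which you correctly flag). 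Second, you carry out the block computation of $\Re(AB^{*})+BM_0(\au)B^{*}$ in the $Q/Q^{\perp}$ decomposition for general $B$ and track the congruence $X\mapsto CXC^{*}$, while the paper first normalises $B=P^{\perp}$ (so that $Q=P$) and computes there; your version makes the invariance of the sign condition under the gauge freedom $A\mapsto CA$, $B\mapsto CB$ explicit, which the paper leaves implicit. Both routes are sound and lead to the same conclusion.
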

This theorem improves the result obtained in \cite[Theorem 2.4]{VK2007}, where it has been stated that boundary conditions satisfying Assumption~\ref{assumption2} define an m--accretive Laplace operator $-\Delta(A,B)$ whenever $\Re (AB^*)\leq 0$. This follows now already from the inequality $\Re (AB^*) + B M_0(\au) B^*  \leq \Re (AB^*),$ which uses $B M_0(\au) B^*  \leq 0$. 

Note that condition (b) for the statement (2) in Theorem~\ref{th2AccQG} assures that Assumption~\ref{assumptionA}, which is needed to apply Theorem~\ref{th1AccQG} is satisfied. When quasi--m--accretive Laplacians are important for the generation of strongly continuous quasi--contractive semigroups, m--accretive Laplacians are relevant for the generation of strongly continuous contractive semigroups. The statement of Theorem~\ref{th2AccQG} follows also from the more general results obtained in \cite[Theorem 2]{Tsekanovski1989}.

\begin{remark}\label{growthbound}
Quasi--m--accretive operators $-\Delta(A,B)$ generate strongly continuous quasi--contractive semigroups 
\begin{eqnarray*}
S(t)= e^{ t \Delta(A,B)} & \mbox{with growth estimate} & \norm{S(t)}\leq  e^{-t\omega},
\end{eqnarray*}
where the growth bound $\omega$ can be chosen as 
\begin{eqnarray*}
\omega= \min \Re \{\langle \psi, -\Delta(A,B)\psi \rangle \mid \psi \in \Dom (-\Delta(A,B))\}.
\end{eqnarray*}
This follows from \cite[Chapter II Corollary 3.6]{EngelNagel} and \cite[Chapter V, \S 10]{Kato}. Therefore the growth bound $\omega$ can be computed, according to Theorem~\ref{th2AccQG}, as the bottom of the spectrum of the self--adjoint operator $ -\Delta(A^{\prime},B^{\prime})$,
\begin{eqnarray*}
\omega= \min \sigma( -\Delta(A^{\prime},B^{\prime})).
\end{eqnarray*}
Estimates for the lowest eigenvalue of self-adjoint Laplacians on finite metric graphs are discussed in \cite[Theorem 3.10]{VKRS2006}, \cite[Corollary 10]{PKQG1} and \cite[Chapter 2]{Ich}.
\end{remark}

Examples of quasi--m--accretive operators are obtained by imposing boundary conditions that satisfy both Assumption~\ref{assumption2} and Assumption~\ref{assumptionA}. Note that this is the case for all boundary conditions with $\Ker B = 0$, as then $L=B^{-1}A$ and $P\equiv 0$. Separated boundary conditions on intervals satisfy these assumptions as well as the conditions given in Example~\ref{ex3AccQG}. The next example gives quasi--m--accretive boundary conditions with $P\neq 0$.

\begin{example}[Complex $\delta$-interaction]\label{DeltaAccQG}
Assume that the boundary conditions are local and for $\deg(\nu)\geq 2$, up to equivalence the boundary conditions at vertex $\nu$ are defined by 
\begin{align*}
A_{\nu}= \left[
   \begin{array}{cccccc}
     1 & -1 & 0 &\cdots & 0 & 0 \\
     0 & 1 & -1 &\cdots & 0 & 0  \\
     0 & 0 & 1 &\cdots & 0 & 0  \\
     \vdots &\vdots  & \vdots & & \vdots  & \vdots \\
        0 & 0 & 0 &\cdots & 1 & -1  \\
   0 & 0 & 0 &\cdots & 0 & \gamma_{\nu} 
   \end{array}
\right], && B_{\nu}= \left[
   \begin{array}{cccccc}
     0 & 0 & 0 &\cdots & 0 & 0 \\
     0 & 0 & 0 &\cdots & 0 & 0  \\
     0 & 0 & 0 &\cdots & 0 & 0  \\
     \vdots &\vdots  & \vdots & & \vdots  & \vdots \\
        0 & 0 & 0 &\cdots & 0 & 0  \\
   1 & 1 & 1 &\cdots & 1 & 1 
   \end{array}
\right],
\end{align*}  
where $\gamma_{\nu}\in \C$. For real $\gamma_{\nu}$ the Assumptions~\ref{assumption2} and \ref{assumptionA} are satisfied, that is one can represent the boundary conditions equivalently by $A^{\prime}_{\nu}= L_{\nu} + P_{\nu}$ and $B^{\prime}_{\nu}= P_{\nu}^{\perp}$, where $P^{\perp}_{\nu}$ is a rank one projector and $L_{\nu}= \frac{-\gamma_{\nu}}{\deg(\nu)} P^{\perp}_{\nu}$, compare \cite[Section 3.2.1]{PKQG1}. This carries over to the case of complex coupling parameters $\gamma_{\nu}$ and the Assumptions~\ref{assumption2} and \ref{assumptionA} are satisfied. For a connected star graph with complex $\delta$-interaction at the vertex with coupling constant $\gamma_{\nu}$ the operator $-\Delta(A_{\nu},B_{\nu})$ is associated with the quadratic form defined by 
\begin{eqnarray*}
\int_{\Ge} \abs{\psi^{\prime}}^2   - \frac {\gamma_{\nu}}{\deg(\nu)} \abs{\underline{\psi}}^2, & \mbox{where } \psi\in \{\psi\in \We \mid P\underline{\psi}=0\},
\end{eqnarray*}
and hence $\Re\left(-\Delta(A_{\nu},B_{\nu})\right)$ is the self--adjoint Laplace operator with real \\ $\delta$--interaction and coupling parameter $\Re \gamma_{\nu}$ at the vertex.
\end{example}

\begin{example}[Complex $\delta^{\prime}$-interaction]\label{ex3AccQG}
Complex $\delta^{\prime}$-potentials are locally given for $\gamma_{\nu}\in \C\setminus\{0\}$ by $\hat{A}_{\nu}= B_{\nu}$ and $\hat{B}_{\nu}= A_{\nu}$, where $A_{\nu}$ and $B_{\nu}$ are taken from the above Example~\ref{DeltaAccQG}. Since $\hat{B}_{\nu}$ is invertible these boundary conditions define quasi--m--accretive Laplace operators. Note that $L_{\nu}$ is a rank one operator and one has that $\Re-\Delta(\hat{A}_{\nu},\hat{B}_{\nu})$ is the operator with $\delta^{\prime}$--couplings and real coupling constants $\Re \gamma_{\nu}$.
\end{example}

An example for which Assumption~\ref{assumption2} is satisfied, but Assumption~\ref{assumptionA} is violated is the following
\begin{example}\label{ex2AccQG}
Let $\Ge=(V,\partial,\Ee)$ be a graph consisting of two external edges $\Ee=\{e_1,e_2\}$ and one vertex $\partial(e_1)=\partial(e_2)$. Consider the boundary conditions defined by 
\begin{eqnarray*}
A_{\tau}=\begin{pmatrix} 1 & -e^{i\tau} \\ 0 & 0\end{pmatrix} &\mbox{and} & B_{\tau}=\begin{pmatrix}  0 & 0 \\ 1 & -e^{-i\tau} \end{pmatrix},
\end{eqnarray*} 
for $\tau \in [0,\pi/2]$. Assumption~\ref{assumption2} is clearly satisfied for all $\tau \in [0,\pi/2]$. Explicit computations give 
\begin{eqnarray*}
\Ran B_{\tau} = \Span \left\{  \begin{pmatrix} 0 \\ 1  \end{pmatrix}\right\},
& & (\Ran B_{\tau})^{\perp} = \Span \left\{  \begin{pmatrix} 1 \\ 0  \end{pmatrix}\right\}, \\ 
 \Ker B_{\tau} = \Span \left\{  \begin{pmatrix} e^{-i\tau} \\ 1  \end{pmatrix}\right\}, & &
 (\Ker B_{\tau})^{\perp} = \Span \left\{  \begin{pmatrix} 1 \\ -e^{-i\tau}  \end{pmatrix}\right\}
\end{eqnarray*}
and therefore 
\begin{align*}
Q_{\tau}A_{\tau}P_{\tau}^{\perp}= \frac{1}{\sqrt{2}}  \begin{pmatrix} 2 & -2 e^{i\tau} \\ 0 & 0   \end{pmatrix} \neq 0.
\end{align*}
The case $\tau=0$ has been discussed in \cite[Example XIX.6.c]{DSIII} already. The function $\psi(k,\cdot)$ defined by
\begin{equation*}
\psi(k,x)=\begin{cases} e^{ikx}, & \mbox{if} \ x\in e_1 , \\  e^{ikx}, & \mbox{if} \ x\in e_2  \end{cases}
\end{equation*}
satisfies the boundary conditions defined by $A_0,B_0$ for any $k$. For $\Im k>0$ one has $$-\Delta(A_0,B_0)\psi(k,\cdot)=k^2\psi(k,\cdot)$$ and hence the operator $-\Delta(A_0,B_0)$ has empty resolvent set. Identifying the graph $\Ge$ with the real line the operator $-\Delta(A_0,B_0)$ corresponds to the operator $$-\sign(x)\tfrac{d}{dx}\sign(x)\tfrac{d}{dx}$$ with its natural domain in $L^2(\R)$, which is self--adjoint in an appropriate Krein space, see for example \cite[Chapter 4, Section 1]{Ich}.
\end{example}

\section{Proofs of the main results}

The proofs of the characterizations consist of two directions. For the ``if-part'' one requires both Assumptions~\ref{assumption2} and \ref{assumptionA}. Denote by $\We_j$, $j\in \Ee \cup \Ie$ the set of all functions $\psi_j\in \He_j$ which are absolutely continuous with square integrable derivative. One sets
$$\We=\bigoplus_{j\in \Ee \cup \Ie} \We_j.$$

Inserting the representation \eqref{eq: Decomposition2} into the quadratic form given in \eqref{eq:qfAccQG} one obtains
\begin{align}\label{qf2AccQG}
\delta_{\Me}[\psi] = \int_{\Ge} \abs{ \psi^{\prime}}^2 - \langle L P^{\perp}\underline{\psi},P^{\perp}\underline{\psi}\rangle_{\Ke}.
\end{align}
Recall that a form $\mathfrak{t}$ is called \textit{sectorial} if there exists a $C\in \R$ such that 
\begin{eqnarray*}
\abs{\Im \mathfrak{t}[u,u]} \leq \Re \mathfrak{t}[u,u] + C \langle u,  u\rangle, & \mbox{for all} & u \in \dom(\mathfrak{t}).
\end{eqnarray*}
From \eqref{qf2AccQG} one deduces
\begin{lemma}\label{lemma2AccQG}
Under the Assumptions~\ref{assumption2} and \ref{assumptionA} the form $\delta_{\Me}$ given in \eqref{eq:qfAccQG} is sectorial. The form domain of its closure is 
\begin{align*}
\dom \, \clo(\delta_{\Me})= \{ \psi\in \We \mid P\underline{\psi}=0\}.
\end{align*}
\end{lemma}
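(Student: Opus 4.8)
The plan is to read sectoriality directly off the representation \eqref{qf2AccQG} and to reduce the identification of the form domain to an $H^1$-density statement, the closedness being transferred from an auxiliary form via equivalence of norms.

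I would start from the reduced form. Under Assumptions~\ref{assumption2} and \ref{assumptionA} every $\psi\in\Dom(-\Delta(\Me))$ satisfies $P\underline{\psi}=0$ and $P^{\perp}\underline{\psi}^{\prime}=-LP^{\perp}\underline{\psi}$, so \eqref{qf2AccQG} applies. Splitting $L=\Re L+i\,\Im L$ into its self--adjoint and skew--adjoint parts on the finite--dimensional space $\Ran P^{\perp}$ gives
\[
\Re\delta_{\Me}[\psi]=\int_{\Ge}\abs{\psi^{\prime}}^2-\langle(\Re L)P^{\perp}\underline{\psi},P^{\perp}\underline{\psi}\rangle_{\Ke},\qquad \Im\delta_{\Me}[\psi]=-\langle(\Im L)P^{\perp}\underline{\psi},P^{\perp}\underline{\psi}\rangle_{\Ke}.
\]
The key analytic input is the edgewise trace (interpolation) inequality $\abs{\psi(0)}^2\le\varepsilon\int\abs{\psi^{\prime}}^2+\varepsilon^{-1}\int\abs{\psi}^2$ on intervals and half--lines, which summed over the finitely many endpoints yields, for every $\varepsilon>0$, a constant $C_{\varepsilon}$ with $\norm{\underline{\psi}}_{\Ke}^2\le\varepsilon\int_{\Ge}\abs{\psi^{\prime}}^2+C_{\varepsilon}\norm{\psi}_{\He}^2$ for all $\psi\in\We$. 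Choosing $\varepsilon$ so small that $\varepsilon\max(\norm{\Re L},\norm{\Im L})\le\tfrac12$ gives $\Re\delta_{\Me}[\psi]\ge\tfrac12\int_{\Ge}\abs{\psi^{\prime}}^2-C_1\norm{\psi}^2$ and $\abs{\Im\delta_{\Me}[\psi]}\le\tfrac12\int_{\Ge}\abs{\psi^{\prime}}^2+C_2\norm{\psi}^2$, hence $\abs{\Im\delta_{\Me}[\psi]}\le\Re\delta_{\Me}[\psi]+(C_1+C_2)\norm{\psi}^2$, i.e.\ sectoriality with vertex $C_1+C_2$. The same two bounds show that for large $C$ the form norm $\Re\delta_{\Me}[\cdot]+(C+1)\norm{\cdot}^2$ is equivalent to the $H^1$-norm of $\We$.

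Because of this equivalence the closure is obtained by closing $\Dom(-\Delta(\Me))$ in the $H^1$-topology. Consider the form $\mathfrak{t}[\psi]=\int_{\Ge}\abs{\psi^{\prime}}^2-\langle LP^{\perp}\underline{\psi},P^{\perp}\underline{\psi}\rangle_{\Ke}$ defined on $\{\psi\in\We\mid P\underline{\psi}=0\}$. The trace map $\psi\mapsto\underline{\psi}$ is $H^1$-continuous, so $P\underline{\psi}=0$ is a closed condition and this space is $H^1$-complete; together with the norm equivalence this makes $\mathfrak{t}$ closed. Since $\delta_{\Me}$ is the restriction of $\mathfrak{t}$ to $\Dom(-\Delta(\Me))$, it then suffices to prove that $\Dom(-\Delta(\Me))$ is $H^1$-dense in $\{\psi\in\We\mid P\underline{\psi}=0\}$; closability of $\delta_{\Me}$ and $\clo(\delta_{\Me})=\mathfrak{t}$ follow automatically.

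The density step is the main obstacle. Given $\psi$ in the target space, I would first approximate it in $H^1$ by functions $\phi_n\in\De$ with the \emph{same} vertex values $\underline{\phi_n}=\underline{\psi}$: fix any $\rho\in\De$ with $\underline{\rho}=\underline{\psi}$, observe that $\psi-\rho\in\We$ vanishes at every vertex and is therefore an $H^1$-limit of functions smooth and compactly supported in the edge interiors, and add $\rho$ back. Each $\phi_n$ still satisfies $P\underline{\phi_n}=0$ but carries uncontrolled derivative data $\underline{\phi_n}^{\prime}$. This is corrected with boundary layers: near each endpoint add $g_{\sigma}(x)=c\,x\,\eta(x/\sigma)$ with a fixed cutoff $\eta$, $\eta(0)=1$; then $g_{\sigma}(0)=0$ leaves the vertex value fixed, $g_{\sigma}^{\prime}(0)=c$ prescribes the derivative, and $\norm{g_{\sigma}}_{H^1}\to0$ as $\sigma\to0$. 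Choosing the constants $c$ so that the resulting derivative vector equals $-LP^{\perp}\underline{\psi}$ produces $\tilde{\phi}_n\in\Dom(-\Delta(\Me))$ with $\tilde{\phi}_n\to\psi$ in $H^1$. The crux is exactly the dichotomy exhibited here: vertex values are $H^1$-continuous and hence survive in the closure, whereas derivative values are not $H^1$-continuous and can be adjusted at vanishing $H^1$-cost, which is why the single condition $P\underline{\psi}=0$ remains.
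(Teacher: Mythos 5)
Your proof is correct and follows essentially the same route as the paper: sectoriality is obtained from the representation \eqref{qf2AccQG} by absorbing the boundary term via the trace estimate of Lemma~\ref{lemma1AccQG}, and the form domain is identified through the equivalence of the form norm with the $H^1$-norm on $\{\psi\in\We\mid P\underline{\psi}=0\}$. The one place you go beyond the paper is the density of $\Dom(-\Delta(\Me))$ in that space, which the paper merely asserts and you establish explicitly with the boundary-layer correction of the derivative data; that step is sound and a worthwhile addition.
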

This result gives reason to the definition of the quadratic form $\bar{\delta}_{L,P}$ given by
\begin{align}\label{deltaAccQG}\index{$\bar{\delta}_{L,P}$}
\bar{\delta}_{L,P}[\psi]:= \int_{\Ge} \abs{\psi^{\prime}}^2 - \langle P^{\perp}\underline{\psi},L P^{\perp}  \underline{\psi}\rangle_{\Ke}
\end{align}
with the form domain $\dom(\bar{\delta}_{L,P})=  \{ u\in \We \mid P\underline{u}=0\}$. Obviously $\delta_{\Me}\subset\bar{\delta}_{L,P}$, for $\Me=\Ker(A, \, B)$, where $P$ is the orthogonal projector onto $\Ker B$ and $L$ is computed from $A,B$ by formula \eqref{KuchmentL}.

The proof of Lemma~\ref{lemma2AccQG} uses the following elementary but important trace estimate, which is borrowed from \cite{PKQG1}.   

\begin{lemma}[{Trace estimate, \cite[Lemma 8]{PKQG1}}]\label{lemma1AccQG}
Let $f \in  H^1([0,a)]$. Then 
$$\abs{f(0)}^2\leq \frac{2}{l} \norm{f}^2_{L^2([0,a])}+ l \norm{f^{\prime}}^2_{L^2([0,a])},$$
holds for any $0<l\leq a$, where $H^1([0,a)]\subset L^2([0,a])$ denotes the Sobolev space of first order. 
\end{lemma}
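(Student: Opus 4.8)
The plan is to derive the estimate from the fundamental theorem of calculus combined with an averaging over the parameter interval $[0,l]$, keeping careful track of the constants. Since $f\in H^1([0,a])$ is absolutely continuous, so is $\abs{f}^2$, and for every $x\in[0,l]$ one has the identity
\begin{align*}
\abs{f(0)}^2 = \abs{f(x)}^2 - 2\int_0^x \Re\big(\overline{f(t)}\,f'(t)\big)\,dt ,
\end{align*}
which follows from $\frac{d}{dt}\abs{f(t)}^2 = 2\Re\big(\overline{f(t)}\,f'(t)\big)$. The key idea is to average this identity over $x\in[0,l]$, so that the pointwise term $\abs{f(x)}^2$ is replaced by an $L^2$-quantity that can be controlled by $\norm{f}_{L^2}$.

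First I would integrate the identity in $x$ over $[0,l]$ and divide by $l$, obtaining
\begin{align*}
\abs{f(0)}^2 = \frac{1}{l}\int_0^l \abs{f(x)}^2\,dx \;-\; \frac{2}{l}\int_0^l\!\!\int_0^x \Re\big(\overline{f}\,f'\big)\,dt\,dx .
\end{align*}
The first term is bounded directly by $\frac{1}{l}\norm{f}^2_{L^2([0,a])}$, using $l\leq a$. For the second term I would enlarge the inner domain of integration from $[0,x]$ to $[0,l]$ and perform the $x$-integration, which produces the bound $2\int_0^l \abs{f}\,\abs{f'}\,dt$; the Cauchy--Schwarz inequality then gives $2\,\norm{f}_{L^2([0,l])}\norm{f'}_{L^2([0,l])}\le 2\,\norm{f}_{L^2([0,a])}\norm{f'}_{L^2([0,a])}$.

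Finally I would apply Young's inequality $2\alpha\beta\leq \frac{1}{l}\alpha^2 + l\,\beta^2$ with $\alpha=\norm{f}_{L^2([0,a])}$ and $\beta=\norm{f'}_{L^2([0,a])}$, splitting the mixed term into $\frac{1}{l}\norm{f}^2_{L^2([0,a])} + l\,\norm{f'}^2_{L^2([0,a])}$. Combining this with the bound on the first term yields exactly $\frac{2}{l}\norm{f}^2_{L^2([0,a])} + l\,\norm{f'}^2_{L^2([0,a])}$, as claimed. The computation is elementary throughout; the only delicate point is the bookkeeping of constants. The averaging step, together with the specific weight $1/l$ chosen in Young's inequality, is precisely what produces the sharp coefficients $2/l$ and $l$, whereas a cruder route (for instance, estimating $\abs{f(0)}$ first via $f(0)=f(x)-\int_0^x f'$ and only then squaring) would yield the worse coefficient $2l$ in front of the derivative term.
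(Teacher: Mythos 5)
Your proof is correct: the identity $\abs{f(0)}^2=\abs{f(x)}^2-2\int_0^x\Re(\overline f f')\,dt$, the averaging over $x\in[0,l]$, and Young's inequality with weight $1/l$ give exactly the constants $2/l$ and $l$, and every step (including the enlargement of the inner integration domain and the use of $l\leq a$) is valid for $f\in H^1$. The paper itself offers no proof of this lemma, citing it as \cite[Lemma 8]{PKQG1}; your argument is the standard one behind that reference, so nothing further is needed.
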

The statement of the lemma remains valid for $f \in  H^1([0,\infty))$ with $0<l<\infty$.

\begin{proof}[Proof of Lemma~\ref{lemma2AccQG}]
The first step is to prove that the quadratic form $\bar{\delta}_{L,P}$ is sectorial with some vertex $C$. The operator $L$ is bounded in the finite dimensional space $\Ke$ and therefore it is even sectorial, as well as $-L$. Hence for sufficiently large $C>0$ there exists a $\gamma>1$ such that the inequality
\begin{align*}
\abs{\Im \langle  \underline{\psi},L\underline{\psi} \rangle_{\Ke} } \leq -\gamma \Re \langle  \underline{\psi},L \underline{\psi} \rangle_{\Ke} + C \langle \underline{\psi},\underline{\psi} \rangle_{\Ke},  
\end{align*}
holds for all $\underline{\psi}\in\Ke$. Adding the positive quantity $0\leq \norm{\psi^{\prime}}_{\He}^2$ on the right hand side gives
\begin{align*}
\abs{\Im \langle \underline{\psi},L\underline{\psi} \rangle_{\Ke}} \leq -\gamma \Re \langle \underline{\psi},L\underline{\psi} \rangle_{\Ke} + C \langle \underline{\psi},\underline{\psi} \rangle_{\Ke} +\norm{\psi^{\prime}}_{\He}^2, 
\end{align*}
for all $\psi\in \We$. The trace estimate in Lemma~\ref{lemma1AccQG} gives for sufficient small $l>0$ with $l\leq \min_{i\in \Ie}a_i$,   
\begin{align*}
C \langle \underline{\psi},\underline{\psi} \rangle_{\Ke} + \norm{\psi^{\prime}}_{\He}^2  \leq  (1+2 Cl)\cdot \norm{\psi^{\prime}}_{\He}^2 + 2 \frac{2C}{l} \norm{\psi}_{\He}^2,
\end{align*}
where the trace estimate has been applied to all endpoint of the edges. This leads to the inequality
\begin{align*}
\abs{\Im \langle  \underline{\psi},L \underline{\psi} \rangle_{\Ke}} \leq  -\gamma \Re \langle \underline{\psi},L \underline{\psi} \rangle_{\Ke} + (1+2Cl) \cdot \norm{\psi^{\prime}}_{\He}^2 + \frac{4C}{l} \norm{\psi}_{\He}^2, && \psi\in \We. 
\end{align*}
Choosing $l>0$ so small that $(1+Cl)\leq \gamma$ for a fixed $C>0$ one can estimate the right hand side of the above inequality and one arrives at
\begin{align*}
\abs{\Im \langle  \underline{\psi},L \underline{\psi} \rangle_{\Ke}} \leq  \gamma \Re \bar{\delta}_{P,L}[\psi] + \frac{4C}{l} \norm{\psi}_{\He}^2, && \psi \in \We.
\end{align*}
Therefore the form $\bar{\delta}_{P,L}$ is sectorial even in the larger domain $\We$. From the domain inclusions  
$$\Dom(-\Delta(A,B))\subset \dom(\bar{\delta}_{P,L}) \subset \We$$
it follows that $\delta_{\Me}$ is sectorial, too. From Proposition~\ref{prop1AccQG} it follows that $-\Delta(A,B)$, where $\Me=\Me(A,B)$, is quasi--m--accretive and therefore even m--sectorial. It remains to determine the closure of the form associated with $-\Delta(A,B)$. 

As $\delta_{\Me}$ is sectorial there exists a constant $C>0$ such that 
$$\norm{\psi}_{\delta_{\Me}}:=\left(\Re \delta_{\Me}[\psi]+C \norm{\psi}_{\He}^2\right)^{1/2}$$ 
defines a norm in $\Dom -\Delta(A,B)$. This norm $\norm{\cdot}_{\Delta_{\Me}}$ is equivalent to the Sobolev norm $\norm{\cdot}_{\We}$, which is given by the formula
$$\norm{\psi}_{\We}^2= \norm{\psi}_{\He}^2+\norm{\psi^{\prime}}_{\He}^2.$$ 
The proof is analogue to the proof of the sectoriality of $\delta_{\Me}$. By the same reasoning it follows that there exists a $C>0$ such that for all $\psi \in \We$
\begin{align*}
0 \leq \Re \langle \underline{\psi},-L\underline{\psi} \rangle_{\Ke}  +\tfrac{1}{2}\norm{\psi^{\prime}}_{\He}+\tfrac{1}{2}\norm{\psi}_{\He} +(C-1)\norm{\psi}_{\He}
\end{align*}
holds. Adding the positive term $\tfrac{1}{2}\norm{\psi^{\prime}}_{\He}+\tfrac{1}{2}\norm{\psi_{\He}}$ on both sides one gets
\begin{align*}
\tfrac{1}{2}\norm{\psi}_{\We}^2 \leq  \norm{\psi}^2_{\delta_{\Me}}.
\end{align*} 
On the other hand applying Lemma~\ref{lemma1AccQG} one has for a constant $C^{\prime}>0$
\begin{align*}
\Re \langle \underline{\psi},-L\underline{\psi} \rangle_{\Ke}  +\norm{\psi^{\prime}}_{\He}+C\norm{\psi}_{\He}\leq  C^{\prime}\norm{\psi}_{\We}.
\end{align*}
Hence $\norm{\cdot}_{\delta_{\Me}}$ and $\norm{\cdot}_{\We}$ are equivalent in $\We_P=\{ \psi \in  \We \mid P \underline{\psi}=0\}$. For the closure of $\Dom(\Delta(A,B))$ with respect to the norms $\norm{\cdot}_{\delta_{\Me}}$ and $\norm{\cdot}_{\We}$ one obtains
\begin{align*} 
\overline{\Dom(-\Delta(A,B))}^{\norm{\cdot}_{\delta_{\Me}}}=\overline{\Dom(-\Delta(A,B))}^{\norm{\cdot}_{\We}}.
\end{align*}
Note that $\Dom(-\Delta(A,B))$ is a dense subset of $\dom(\bar{\delta}_{P,L})$ and $\dom(\bar{\delta}_{P,L})$ is a closed subspace of $\We$. Therefore one arrives at the conclusion that the closure of the form $\delta_{\Me}$ is $\bar{\delta}_{L,P}$. 
\end{proof}

To prove the ``only-if'' part it is sufficient to show that assuming Assumption~\ref{assumption2} and $QAP^{\perp}\neq 0$ gives that the numerical range of operator $-\Delta(A,B)$ contains the whole real line and therefore it cannot be quasi-accretive. 
%For $A,B$ satisfying assumption \ref{assumption1} and \ref{assumptionA} one has that 

%Turn now to the second case and let in the following Assumption~\ref{assumption2} be satisfied, but suppose that $PAP^{\perp}\neq 0$, which means Assumption~\ref{assumptionA} is violated. One proves now explicitly that in this case the numerical range of the form $\Re(-\Delta_{\Me})$, $\Me=\Ker (A, \, B)$ contains the whole real line. 

\begin{lemma}\label{lemma4AccQG}
Let one of the Assumptions~\ref{assumption2} and \ref{assumptionA} be violated. Then $-\Delta(A,B)$ fails to be quasi--m--accretive. 
\end{lemma}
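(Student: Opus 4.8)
The plan is to prove the contrapositive directly: assuming Assumption~\ref{assumption2} holds, I will show that if Assumption~\ref{assumptionA} fails, i.e. $QAP^{\perp}\neq 0$, then the numerical range of $-\Delta(A,B)$ covers all of $\R$, so the operator cannot be quasi--accretive (the failure of Assumption~\ref{assumption2} alone is handled separately, since without maximal rank the operator is not even an admissible realization in the sense needed). The guiding idea comes from the quadratic form \eqref{eq:qfAccQG}, namely $\delta_{\Me}[\psi]=\int_{\Ge}\abs{\psi'}^2+\langle\underline{\psi},\underline{\psi'}\rangle_{\Ke}$: to push $\Re\delta_{\Me}[\psi]$ arbitrarily far toward $-\infty$ while keeping $\norm{\psi}_{\He}$ controlled, I must exploit precisely the boundary term $\langle\underline{\psi},\underline{\psi'}\rangle_{\Ke}$, and the block structure \eqref{eq: Decomposition} shows that when $QAP^{\perp}\neq 0$ there is a ``free'' direction in the boundary data that is not constrained by $B$.

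First I would unpack what $QAP^{\perp}\neq 0$ means at the level of admissible boundary data. A function $\psi\in\Dom(-\Delta(A,B))$ satisfies $A\underline{\psi}+B\underline{\psi'}=0$. Decomposing with the projectors $P,Q$ and reading off the rows of \eqref{eq: Decomposition1}, the bottom row gives the constraint $QAP^{\perp}\underline{\psi}+QAP\underline{\psi}=0$ with no contribution from $\underline{\psi'}$ (since $QB=0$). Because $QAP^{\perp}\neq 0$, I can select a unit vector $\xi\in\Ran P^{\perp}$ with $QAP^{\perp}\xi\neq 0$, and then the surjectivity in Assumption~\ref{assumption2} guarantees I can still solve for $\underline{\psi'}$ in the remaining block to satisfy the top row for essentially arbitrary prescribed boundary values. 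The upshot I want to extract is: the admissible boundary values $\underline{\psi}$ are not forced to lie in $\Ran P^{\perp}$, so one can prescribe boundary data $\underline{\psi}$ and a companion $\underline{\psi'}$ meeting the constraints while the pairing $\langle\underline{\psi},\underline{\psi'}\rangle_{\Ke}$ is driven to take any real value of either sign.

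Next I would construct an explicit one--parameter family of test functions realizing any target value $t\in\R$ for $\Re\langle\psi,-\Delta(A,B)\psi\rangle$. Concretely, on each edge I would use compactly supported smooth bumps near the endpoints whose boundary values $(\psi(0),\psi'(0),\dots)$ realize prescribed vectors $\underline{\psi},\underline{\psi'}$, while making $\norm{\psi}_{\He}^2$ and $\int_{\Ge}\abs{\psi'}^2$ as small as desired by a scaling/concentration argument (short bumps contribute little $L^2$ mass but can carry prescribed boundary values). Since $\Re\delta_{\Me}[\psi]=\int_{\Ge}\abs{\psi'}^2+\Re\langle\underline{\psi},\underline{\psi'}\rangle_{\Ke}$ and the boundary pairing can be tuned — in both sign and magnitude — by the freedom identified above, I can force $\Re\delta_{\Me}[\psi]$ below any bound, with $\norm{\psi}_{\He}^2$ bounded. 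This contradicts quasi--accretivity, for which $\Re\delta_{\Me}[\psi]\geq -C\norm{\psi}_{\He}^2$ must hold uniformly.

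The main obstacle, I expect, is the bookkeeping that makes the boundary--data construction both \emph{admissible} (satisfying $A\underline{\psi}+B\underline{\psi'}=0$) and \emph{unbounded below} in the real part of the pairing while keeping $\norm{\psi}_{\He}$ controlled: one must verify that the direction supplied by $QAP^{\perp}\neq 0$ genuinely couples into the pairing $\langle\underline{\psi},\underline{\psi'}\rangle_{\Ke}$ with a nonzero real part that can be scaled, rather than being annihilated by the constraints. The cleanest way to settle this is to pass to the normalized block form and exhibit, on a single two--edge configuration (mirroring Example~\ref{ex2AccQG}, where exactly $Q_{\tau}A_{\tau}P_{\tau}^{\perp}\neq 0$ produced an operator with empty resolvent set and numerical range equal to $\R$), a concrete eigenfunction family $\psi(k,\cdot)$ with $-\Delta(A,B)\psi(k,\cdot)=k^2\psi(k,\cdot)$ for $\Im k>0$; since $k^2$ then sweeps out a region touching arbitrarily negative real parts, quasi--accretivity fails. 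Reducing the general violation to such a localized model, via the block decomposition, is the technical heart of the argument.
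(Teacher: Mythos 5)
Your structural analysis of the boundary conditions is essentially the paper's: when Assumption~\ref{assumptionA} fails one must produce \emph{admissible} boundary data $\underline{\psi}=\alpha$ with $P\alpha\neq 0$, and then exploit that the component $P\underline{\psi}^{\prime}$ is completely unconstrained by $A\underline{\psi}+B\underline{\psi}^{\prime}=0$, so that $\Re\langle\underline{\psi},\underline{\psi}^{\prime}\rangle_{\Ke}$ can be driven to $-\infty$. (A small logical gap here: exhibiting $\xi\in\Ran P^{\perp}$ with $QA\xi\neq 0$ only shows $\Ran P^{\perp}\not\subseteq\Ker QA$; to conclude that some admissible $\alpha$ has $P\alpha\neq 0$ you need the dimension count $\dim\Ker QA=\dim\Ran P^{\perp}$, which is exactly the paper's step $\dim\Ker PA^{\prime}=\dim\Ran P^{\perp}$.)

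The genuine gap is in the test-function construction, and it is not cosmetic. You claim the bumps can carry the fixed nonzero trace $\underline{\psi}=\alpha$ while making both $\norm{\psi}_{\He}^2$ and $\int_{\Ge}\abs{\psi^{\prime}}^2$ as small as desired; this is impossible, since by the trace estimate of Lemma~\ref{lemma1AccQG} both norms tending to zero forces $\abs{f(0)}\to 0$. Worse, the quantitative balance is precisely where the difficulty lies: to realize $P\underline{\psi}^{\prime}=-nP\alpha$ the function must have derivative of order $n$ at the endpoints, and a bump of width $\varepsilon$ carrying trace $\alpha$ and a derivative trace of order $n$ has Dirichlet energy of order $\varepsilon^{-1}+n^{2}\varepsilon\geq 2n$, i.e.\ at best comparable to the gain $-n\norm{P\alpha}^2$ from the boundary pairing, so your sketch does not show $\Re\delta_{\Me}[u_n]\to-\infty$, let alone that it outruns $\norm{u_n}_{\He}^2$. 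The paper's proof is devoted to exactly this point: it takes $e^{H_nx}\alpha$ on an interval of exponentially small length $a_n=e^{-2\norm{H_n}}/\norm{H_n}^2$, glued by polynomials to a plateau and then to zero, which keeps $\int_{\Ge}\abs{u_n^{\prime}}^2$ uniformly bounded and $\norm{u_n}_{\He}$ bounded above and below while the boundary term equals $-n\norm{P\alpha}^2+O(1)$. Your proposed fallback --- reducing the general case to an eigenfunction family as in Example~\ref{ex2AccQG} --- also does not work: an exponential $e^{-\kappa x}\alpha$ is admissible only if $(A-\kappa B)\alpha=0$, and $\det(A-\kappa B)$ is a polynomial in $\kappa$ with finitely many roots unless it vanishes identically; that identical vanishing is a special feature of that example, not a consequence of $QAP^{\perp}\neq 0$.
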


\begin{proof}

Let Assumption~\ref{assumption2} be violated. Then by Proposition~\ref{prop1AccQG} the operator $-\Delta(A,B)$ fails to be quasi--m--accretive. Therefore, suppose that Assumption~\ref{assumption2} holds and Assumption~\ref{assumptionA} is violated.

Consider for simplicity the parametrization~\eqref{eq: Decomposition} instead of the parametrization~\eqref{eq: Decomposition1}. Inserting the boundary condition \eqref{eq: Decomposition} into the quadratic form \eqref{eq:qfAccQG} yields
\begin{align*}
\delta_{\Me}[\psi] = \int_{\Ge} \abs{ \psi^{\prime}}^2 - 
\langle P^{\perp} \underline{\psi}, P^{\perp}A^{\prime}  \underline{\psi}\rangle_{\Ke}
+\langle P\underline{\psi},P\underline{\psi}^{\prime}\rangle_{\Ke},
\end{align*}
where $\Me(A,B)=\Me(A^{\prime},B^{\prime})$ and $P$ is the orthogonal projector onto $\Ker B^{\prime}$. Since $QA$ is surjective by Assumption~\ref{assumption2} also $PA^{\prime}$ is surjective, and one has that 
$$\dim \Ker PA^{\prime} = \dim \Ran P^{\perp}.$$ 
Therefore $QAP^{\perp}\neq 0$ implies $PA^{\prime}P^{\perp}\neq 0$ which delivers $\Ker PA^{\prime} \neq \Ran P^{\perp}$. This in turn implies that there is a vector $\alpha$ such that 
\begin{eqnarray}\label{alphaAccQG}
PA^{\prime} \,\alpha=0, & \mbox{but} & P \,\alpha\neq 0.
\end{eqnarray}
Using this vector one constructs explicitly a sequence $u_n\in \Dom (-\Delta(A^{\prime},B^{\prime}))$ such that $\Re \langle u_n, -\Delta(A^{\prime}, B^{\prime})u_n \rangle \rightarrow -\infty,$ for $n\to \infty$. For simplicity suppose that $\Ge$ is a finite star graph, that is $\Ie=\emptyset$ and $\abs{\Ee}=m$. 

First one defines an auxiliary matrix-valued function on the half-line. Let $0<a<b<c$ be positive numbers, $H\in \mbox{End}(\Ke)$ an arbitrary matrix and $p(H;x)$ and $q(H;x)$ functions in $x$ and $H$. One defines $$\Phi_H{[p,q;a,b,c]}\colon [0,\infty) \rightarrow \mbox{End}(\Ke),$$
by
\begin{align*}%\index{$\Phi_H{[a_n,b_n,c_n]}$}
\Phi_H{[p,q;a,b,c]}(x)= \begin{cases} e^{Hx}, & x\in [0,a], \\  
                            p(H;x), & x\in (a,b), \\
                            q(H;x), &      x\in [b,c), \\
                              0, & x\in[c,\infty).
               \end{cases}
\end{align*}
Consider for $n\geq 1$ the sequence of matrices 
\begin{align*}
H_n=-\begin{pmatrix} P^{\perp} A^{\prime} P^{\perp} & P^{\perp} A^{\prime} P \\ 0 & n P    \end{pmatrix}
\end{align*}
and note that
\begin{eqnarray*}
2 \Re H_n=\begin{pmatrix} 2 \Re P^{\perp} A^{\prime} P^{\perp} & P^{\perp} A^{\prime} P \\ (P^{\perp} A^{\prime} P)^* & 2n P    \end{pmatrix} 
\end{eqnarray*}
and that
\begin{eqnarray*}
2 \Im H_n=\begin{pmatrix} 2 \Im P^{\perp} A^{\prime} P^{\perp} & P^{\perp} A^{\prime} P \\ -(P^{\perp} A^{\prime} P)^* & 0    \end{pmatrix}.
\end{eqnarray*}
In order to analyse these bounded block operator matrices the concept of the quadratic numerical range is helpful, for further information on this topic see the book \cite{Tretter} and the references therein. As the numerical ranges of the diagonal blocks are contained in the numerical range of the whole block operator matrix one obtains that $\norm{\Re H_n} \sim n $ for large $n$. Therefore also 
\begin{eqnarray*}
\norm{H_n}= \left(\norm{\Re H_n}^2 + \norm{\Im H_n}^2  \right)^{1/2}\sim n & & \mbox{for $n$ large}.
\end{eqnarray*}

Define now for sequences $(a_n),(b_n),(c_n)$ with $0<a_n<b_n<c_n$ for $n\geq 1$ the polynomial $p_n(H_n;\cdot)$ in $x$ by
\begin{equation*}
p_n(H_n;x)= \beta_n \frac{(x-b_n)^{n+1}}{n+1}+ \gamma_n
\end{equation*}
with coefficients
\begin{eqnarray*}
\beta_n= \frac{H_n e^{H_na_n}}{(a_n-b_n)^n} &\mbox{and} & \gamma_n= \left[ \mathds{1} - \frac{(a_n-b_n)}{n+1}H_n \right] e^{H_n a_n}.
\end{eqnarray*}
This assures that 
\begin{eqnarray*}
p_n(H_n;a_n)=e^{Ha_n}, & & \tfrac{d}{dx} p_n(H_n;a_n)= H_n e^{H_na_n}, \\
p_n(H_n;b_n)=\gamma_n & \mbox{and} & \tfrac{d}{dx}  p_n(H_n;b_n)= 0.
\end{eqnarray*}
Furthermore choose $q_n$ to be a polynomial in $x$ such that 
\begin{eqnarray*}
q_n(H_n;b_n)= \gamma_n, & & \tfrac{d}{dx} q_n(H_n;b_n)=0, \\
q_n(H_n;c_n)= 0 &\mbox{and} & \tfrac{d}{dx}q_n(H_n;c_n)=0 
\end{eqnarray*}
hold for all $n\geq 1$. This gives that the function $\Phi_{H_n}{[p_n,q_n;a_n,b_n,c_n]}$ is a function in the Sobolev space $H^2([0,\infty), \mbox{End}(\Ke))$ for all $n\in \N$, and by construction it is even compactly supported.

Now with the vector $\alpha$ chosen above, compare \eqref{alphaAccQG}, one sets
\begin{eqnarray*}
\{u_n\}_e(x):=  \big\{\Phi_{H_n}[p_n,q_n;a_n,b_n,c_n](x) \alpha\big\}_{e}, & \mbox{for } e\in \Ee. 
\end{eqnarray*}
This defines functions $u_n\colon \Ge \rightarrow \C$ for $n\in\N$. By construction one has $u_n\in \De$ . One proves now that $u_n\in \Dom(-\Delta(A^{\prime},B^{\prime}))$. Indeed, 
\begin{eqnarray*}
\Phi_{H_n}[p_n,q_n;a_n,b_n,c_n](0) \alpha= \alpha & \mbox{and} & \frac{d}{dx} \Phi_{H_n}[p_n,q_n;a_n,b_n,c_n](\cdot) \alpha \Big|_{x=0}= H_n\alpha.
\end{eqnarray*}
Therefore
\begin{eqnarray*}
\underline{u_n}= \begin{pmatrix} P^{\perp}\alpha \\ P \alpha  \end{pmatrix}
& \mbox{and} & \underline{u_n}^{\prime}= -\begin{pmatrix} P^{\perp} A^{\prime} P^{\perp} & P^{\perp} A^{\prime} P \\ 0 & n P    \end{pmatrix} \begin{pmatrix} P^{\perp}\alpha \\ P \alpha  \end{pmatrix}
\end{eqnarray*}
for all $n\in\N$. From $\underline{u_n}=\alpha$, $\begin{pmatrix} PA^{\prime}P^{\perp} & PA^{\prime}P \end{pmatrix} \alpha=0$ and \eqref{alphaAccQG} it follows that
\begin{align*}
A^{\prime} \underline{u_n} + B^{\prime} \underline{u_n}^{\prime}= 
   \begin{pmatrix} P^{\perp} A^{\prime} P^{\perp} & P^{\perp} A^{\prime} P \\ P A^{\prime} P^{\perp} & PA^{\prime} P  \end{pmatrix} \begin{pmatrix} P^{\perp}\underline{u_n} \\ P \underline{u_n}  \end{pmatrix} +
   \begin{pmatrix}  P^{\perp} & 0 \\ 0& 0 \end{pmatrix} \begin{pmatrix} P^{\perp}\underline{u_n}^{\prime} \\ P \underline{u_n}^{\prime}  \end{pmatrix} 
     =0.
\end{align*}
This implies that $u_n\in \Dom(-\Delta(A^{\prime},B^{\prime}))$ for all $n\in \N$. Inserting $u_n$ into the quadratic form \eqref{eq:qfAccQG} gives
\begin{align*}
\langle u_n, -\Delta(A^{\prime},B^{\prime}) u_n \rangle =& \int_{\Ge} \abs{u_n^{\prime}}^2 + \langle \underline{u_n}, \underline{u_n}^{\prime} \rangle_{\Ke} \\
 =& \int_{\Ge} \abs{u_n^{\prime}}^2 -   \langle  P^{\perp}\alpha, P^{\perp} A^{\prime} \alpha \rangle_{\Ke}  - n \langle P \alpha, P \alpha \rangle_{\Ke}.
\end{align*}
The term $- \langle  P^{\perp}\alpha, P^{\perp} A^{\prime} \alpha \rangle_{\Ke}$ is bounded. Chose now 
\begin{eqnarray*}
a_n=\frac{e^{-2 \norm{H_n}}}{\norm{H_n}^2}, &  \displaystyle{ b_n=2 \frac{e^{-2 \norm{H_n}}}{\norm{H_n}^2} }& \mbox{and } c_n=\mbox{constant},
\end{eqnarray*}
for $n$ sufficiently large. With this one obtains 
\begin{align*}
\int_{a_n}^{b_n}  \left\vert \tfrac{d}{dx}(p_n)(H_n;x)\alpha \right\vert^2 dx &\leq m \frac{\norm{H_n}^2 }{\abs{b_n-a_n}^{2n}} \norm{e^{H_n}}^2 \norm{\alpha}^2 \int_{a_n}^{b_n}  \abs{x-b_n}^{2n}   dx \\
                              &\leq   m \frac{\norm{H_n}^2 e^{2\norm{H_n}} \norm{\alpha}^2   }{(2n+1)} \abs{a_n-b_n} \rightarrow 0, \quad \mbox{for } n \to \infty,
\end{align*}
where $m=\abs{\Ee}$. Furthermore one has
\begin{align*}
\int_0^{a_n}  \abs{ H_n e^{H_nx}\alpha }^2 dx &\leq m \norm{\alpha}^2 \norm{ H_n}^2 \int_0^{a_n}  e^{2\norm{H_n}x} dx  \\
&= \frac{m}{2} \norm{\alpha}^2 \norm{ H_n} \left( e^{2\norm{H_n}a_n}-1\right) \rightarrow 0, \quad \mbox{for } n \to \infty.
\end{align*}
Since $\norm{\gamma_n} \rightarrow 1$ for $n\to \infty$ and $\tfrac{d}{dx}  p_n(H;b_n)= 0$, there exist constants $C,c>0$ such that
\begin{align}\label{Cc}
c \leq \int_{b_n}^c  \abs{ \tfrac{d}{dx}(q_n(H_n;x))\alpha }^2 dx \leq C
\end{align}
holds for all $n\in \N$. Together this gives that $\int_{\Ge} \abs{u_n^{\prime}}^2$ is uniformly bounded  whereas \\ $ \Re \langle \underline{u_n}, \underline{u_n}^{\prime} \rangle \to -\infty$ and therefore
\begin{eqnarray*}
\Re \langle u_n, -\Delta(A^{\prime},B^{\prime}) u_n \rangle   \rightarrow -\infty , & \mbox{for} \ n\to\infty.
\end{eqnarray*}
Now one estimates the $L^2$--norm of $u_n$. Since $u_n \in H^2([0,c_n], \mbox{End}(\Ke))$, $u_n(c_n)=0$ and $c_n=\mbox{constant}$ for $n$ sufficiently large it follows that a Poincar\'{e} inequality can be applied to $u_n$ and hence there is a  constant $C>0$ which is uniform in $n$ such that 
\begin{eqnarray*}
\int_{\Ge} \abs{u_n}^2 \leq C  \int_{\Ge} \abs{u_n^{\prime}}^2 & \mbox{for all } n \in \N.
\end{eqnarray*}
At the same time one has that
\begin{eqnarray*}
0<\int_{b_n}^{c_n} \abs{q_n(H_n;x)\alpha}^2  < \int_{\Ge} \abs{u_n}^2  & \mbox{for all } n \in \N.
\end{eqnarray*}
Hence $\norm{u_n}$ is uniformly bounded from below and from above. Consequently the operator $-\Delta(A,B)=-\Delta(A^{\prime},B^{\prime})$ is not quasi--accretive. This proves Lemma~\ref{lemma4AccQG} for the case of star graphs. The construction of $\Phi_{H_n}[p_n,q_n;a_n,b_n,c_n](\cdot)$ has been done only for simplicity on the half line. Actually only the locality of the boundary conditions is needed. Restricting the functions $u_n$ to small neighbourhoods of the vertices carries the proof over to arbitrary finite metric graphs. Locality of the boundary conditions can be achieved always by collapsing all vertices into one single vertex. This method of ``localisation'' has been used frequently in the literatur, for example recently in \cite{Davies}. 
\end{proof}

\begin{proof}[Proof of Theorem~\ref{th1AccQG}]
If both Assumptions~\ref{assumption2} and \ref{assumptionA} are satisfied then by \\ Lemma~\ref{lemma2AccQG} the operator $-\Delta(A,B)$ is quasi--m--accretive and even m--sectorial. If one of the assumptions is violated it follows from Lemma~\ref{lemma4AccQG} that $-\Delta(A,B)$ fails to be quasi--m--accretive.  
\end{proof}

\begin{lemma}\label{lemma3AccQG}
Under the Assumptions~\ref{assumption2} and \ref{assumptionA} the operator $-\Delta(A,B)$ is m--sectorial and the symmetric form $\Re \left(\clo \, \delta_{\Me}\right)$ with $\Me=\Me(A,B)$ corresponds to the self-adjoint Laplace operator $-\Delta(A^{\prime},B^{\prime})$ with $A^{\prime}=P+ \Re L$ and $B^{\prime}= P^{\perp}$, where $P$ is the orthogonal projector onto $\Ker B$, $P^{\perp}=\mathds{1}-P$ and $L$ is computed from $A,B$ by formula~\eqref{KuchmentL}. 
\end{lemma}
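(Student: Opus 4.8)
The plan is to combine Lemma~\ref{lemma2AccQG} with the first representation theorem \cite[Chapter VI, \S 3.1]{Kato}. By Lemma~\ref{lemma2AccQG} the closure $\clo\,\delta_{\Me}$ equals the form $\bar{\delta}_{L,P}$ from \eqref{deltaAccQG}, so it suffices to identify the self--adjoint operator generated by the symmetric closed form $\Re\,\bar{\delta}_{L,P}$. First I would compute this real part explicitly. Since $L$ is a bounded operator on the finite--dimensional space $\Ke$, for every $v\in\Ke$ one has $\Re\langle v, Lv\rangle_{\Ke} = \langle v, (\Re L)v\rangle_{\Ke}$ with $\Re L := \tfrac{1}{2}(L+L^*)$ self--adjoint, and hence
\[
\Re\,\bar{\delta}_{L,P}[\psi] = \int_{\Ge}\abs{\psi'}^2 - \langle P^{\perp}\underline{\psi}, (\Re L)P^{\perp}\underline{\psi}\rangle_{\Ke}
\]
on the common form domain $\{\psi\in\We \mid P\underline{\psi}=0\}$. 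In other words $\Re\,\bar{\delta}_{L,P} = \bar{\delta}_{\Re L, P}$, the same type of form but with the self--adjoint matrix $\Re L$ in place of $L$.

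Next I would check that $A'=P+\Re L$ and $B'=P^{\perp}$ define admissible, self--adjoint boundary conditions. The starting point is the identity $L=P^{\perp}AP^{\perp}$ equivalent to \eqref{KuchmentL}, i.e.\ $L=P^{\perp}LP^{\perp}$ (so $PL=LP=0$); consequently also $\Re L = P^{\perp}(\Re L)P^{\perp}$. A short computation then gives $A'(B')^* = \Re L = B'(A')^*$, so $A'(B')^*$ is Hermitian, which together with surjectivity of $(A',\,B')$ yields that $-\Delta(A',B')$ is self--adjoint by the self--adjointness criterion for Laplacians defined through boundary conditions (see \cite{PKQG1}). The same block bookkeeping shows that $(A',B')$ satisfy Assumption~\ref{assumption2} and Assumption~\ref{assumptionA}, that the orthogonal projector onto $\Ker B'$ is again $P$, and that $P^{\perp}A'P^{\perp}=\Re L$; thus the ``$L$'' built from $A',B'$ via \eqref{KuchmentL} is exactly $\Re L$.

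Finally, applying Lemma~\ref{lemma2AccQG} to the pair $(A',B')$ shows that the closure of $\delta_{\Me(A',B')}$ is $\bar{\delta}_{\Re L,P}$, which coincides with $\Re\,\bar{\delta}_{L,P}$ computed above. Since $-\Delta(A',B')$ is self--adjoint, it is the operator associated with its own closed symmetric form, so by the uniqueness in the first representation theorem the self--adjoint operator associated with $\Re(\clo\,\delta_{\Me})$ is precisely $-\Delta(A',B')$. I expect the main obstacle to be the algebraic verification in the middle step: confirming that passing from $L$ to $\Re L$ preserves the block structure $P^{\perp}(\cdot)P^{\perp}$, and hence keeps $(A',B')$ within the admissible and self--adjoint class while reproducing exactly the real--part form. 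Everything else reduces to the already proved Lemma~\ref{lemma2AccQG} and the standard representation theorem.
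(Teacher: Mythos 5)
Your argument is correct, and its core coincides with the paper's: both reduce the claim to the identity $\Re\,\bar{\delta}_{L,P}=\bar{\delta}_{\Re L,P}$ (valid because $\Re\langle v,Lv\rangle_{\Ke}=\langle v,(\Re L)v\rangle_{\Ke}$ on the finite-dimensional space $\Ke$) together with the first representation theorem. The difference lies in the last step: the paper simply invokes the classification of such forms and their associated self-adjoint Laplacians from Kuchment's Theorems~6 and~9 in \cite{PKQG1}, whereas you re-derive that correspondence for the specific pair $A'=P+\Re L$, $B'=P^{\perp}$ by checking the self-adjointness criterion ($A'(B')^{*}$ Hermitian plus surjectivity of $(A',\,B')$) and then applying Lemma~\ref{lemma2AccQG} a second time to $(A',B')$ to identify $\clo\,\delta_{\Me(A',B')}=\bar{\delta}_{\Re L,P}$. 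This makes your proof more self-contained at the cost of some block bookkeeping, and that bookkeeping is sound: the properties you actually use are $LP=PL=0$, hence $\Re L=P^{\perp}(\Re L)P^{\perp}$, which do follow from \eqref{KuchmentL} since $L$ has range in $\Ran P^{\perp}$ and kills $\Ran P$. One small imprecision: \eqref{KuchmentL} reads $L=(Q^{\perp}BP^{\perp})^{-1}Q^{\perp}AP^{\perp}$, which equals $P^{\perp}A'P^{\perp}$ only after the normalization $B'=P^{\perp}$, not $P^{\perp}AP^{\perp}$ for the original $A$; since you only use $L=P^{\perp}LP^{\perp}$, the argument is unaffected, but the phrasing should be corrected.
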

\begin{proof}[Proof]
The form $\Re \clo \left( \delta_{\Me}\right)= \Re \bar{\delta}_{P,L}$ is symmetric and closed. Hence according to the first representation theorem, compare for example \cite[Theorem VI.2.1]{Kato}, it corresponds to a self-adjoint operator. All forms of this type and their corresponding self-adjoint operators has been described in \cite[Theorems 6 and 9]{PKQG1}. Since $\Re \bar{\delta}_{P,L}= \bar{\delta}_{P, \Re L}$ the operator defined by $\Re \clo \left( -\Delta_{\Me}\right)$ agrees with the Laplace operator $-\Delta(P + \Re L, P^{\perp})$. 
\end{proof}

\begin{proof}[Proof of Theorem~\ref{th2AccQG}]
The first statement follows immediately from Lemma~\ref{lemma3AccQG}. By \cite[Theorem 1]{AL2009} a self-adjoint Laplace operator $-\Delta(A_{sa},B_{sa})$ is non-negative if and only if
\begin{eqnarray*}
A_{sa}B_{sa}^* + B_{sa} M_0(\au) B_{sa}^*  \leq 0,
\end{eqnarray*} 
where $M_0(\au)$ is the matrix given in Theorem~\ref{th2AccQG}. Therefore the operator $-\Delta(A,B)$ is m--accretive if and only if the Assumptions~\ref{assumption2} and \ref{assumptionA} are satisfied and the form $\Re \bar{\delta}_{(P,L)}$ is non-negative. According to Lemma~\ref{lemma3AccQG} this is the case if and only if $-\Delta(A^{\prime},B^{\prime})\geq 0$ with $A^{\prime}=P+ \Re L$ and $B^{\prime}= P^{\perp}$. Using the cited result this is equivalent to the condition 
\begin{equation*}
\Re AB^* + B M_0(\au) B^*  \leq 0.
\end{equation*} 
It remains to show that $\Re AB^* + B M_0(\au) B^*  \leq 0$ implies that Assumption~\ref{assumptionA} holds. Let Assumption~\ref{assumption2} be satisfied. Then one can assume without loss of generality $B=P^{\perp}$. The decomposition of $A$ with respect to $P$ and $P^{\perp}$ gives
\begin{align*}
AB^*=\begin{pmatrix}
P^{\perp}AP^{\perp} & P^{\perp}A P \\
PAP^{\perp} & PAP
\end{pmatrix}
\begin{pmatrix}
P^{\perp} & 0 \\
0 & 0
\end{pmatrix}= \begin{pmatrix}
P^{\perp}AP^{\perp} & 0 \\
PAP^{\perp} & 0
\end{pmatrix}
\end{align*}
and, hence,
\begin{eqnarray*}
\Re(AB^*)=\frac{1}{2}\begin{pmatrix}
2 \Re(P^{\perp}AP^{\perp}) & (PAP^{\perp})^* \\
PAP^{\perp} & 0
\end{pmatrix}
\end{eqnarray*}
and
\begin{eqnarray*}
BM_0(\au)B^*=\begin{pmatrix}
P^{\perp}M_0(\au)P^{\perp} & 0 \\
0 & 0
\end{pmatrix}.
\end{eqnarray*}
The numerical range of such block-matrix operators is discussed in the following elementary lemma. It covers a particular case of the problem of the positive completion of diagonal block-operator matrices, see \cite{Hou} and the references therein, where the general problem is discussed.
\begin{lemma}\label{QWB}
Let $M$ be a bounded self-adjoint block-matrix operator of the form
\begin{eqnarray*}
M=\begin{pmatrix}
A & B^* \\
B & 0
\end{pmatrix}, & \mbox{where} & A=A^*.
\end{eqnarray*}
Then $M\leq 0$ ( $M\geq 0$) if and only if $A\leq 0$ ($A\geq 0$)  and $B\equiv 0$.
\end{lemma}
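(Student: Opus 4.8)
The plan is to reduce everything to the scalar quadratic form of $M$ evaluated on a vector split according to the block structure, and then to exploit the single decisive feature: the lower--right block vanishes. Writing a generic element of the underlying space as $x\oplus y$, one has $M(x\oplus y)=(Ax+B^{*}y)\oplus(Bx)$, and using the defining relation of the adjoint together with conjugate symmetry of $\langle\cdot,\cdot\rangle$ the quadratic form collapses to
\[
\langle x\oplus y,\,M(x\oplus y)\rangle=\langle x,Ax\rangle+2\Re\langle y,Bx\rangle .
\]
The point I want to stress is that the right--hand side contains \emph{no} term quadratic in $y$, precisely because the $(2,2)$--block is zero; the entire dependence on $y$ sits in the linear cross term $2\Re\langle y,Bx\rangle$.

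I would treat the assertion for $M\le 0$, the case $M\ge 0$ being identical after reversing signs. The ``if'' direction is immediate: if $B\equiv 0$ and $A\le 0$, the form above reduces to $\langle x,Ax\rangle\le 0$, so $M\le 0$. For the ``only if'' direction, assume $M\le 0$. Testing on $x\oplus 0$ gives $\langle x,Ax\rangle\le 0$ for all $x$, i.e.\ $A\le 0$. To force $B\equiv 0$ I argue by contradiction: if $B\neq 0$, pick $x_0$ with $Bx_0\neq 0$ and test against $x_0\oplus\lambda Bx_0$ with a real parameter $\lambda$. Then the form equals
\[
\langle x_0,Ax_0\rangle+2\lambda\,\norm{Bx_0}^{2},
\]
which is affine in $\lambda$ with strictly positive slope, hence tends to $+\infty$ as $\lambda\to+\infty$, contradicting $M\le 0$. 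Therefore $B\equiv 0$.

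The only subtlety worth flagging is conceptual rather than computational: the vanishing $(2,2)$--block removes any quadratic control in the $y$--variable, so a nonzero $B$ makes the numerical range unbounded in both directions along the test family $x_0\oplus\lambda Bx_0$. Once this observation is in place the argument is entirely elementary, requiring no spectral theory, no Schur complement, and no compactness; the correct choice of test vector is the whole content. I would present the computation of the cross term and the two one--line sign arguments, and leave the $M\ge 0$ case to the reader by symmetry.
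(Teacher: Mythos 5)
Your proof is correct, and it takes a somewhat more elementary route than the paper's. The paper argues via the quadratic numerical range: it picks $u,v$ with $b=\langle Bu,v\rangle\neq 0$, forms the $2\times 2$ compression $\begin{pmatrix} a & b\\ b^* & 0\end{pmatrix}$ with $a=\langle Au,u\rangle$, computes its eigenvalues $\lambda_{\pm}=\tfrac12 a\pm\tfrac12\sqrt{a^2+4\abs{b}^2}$, and observes that for $b\neq 0$ one of them is strictly positive and one strictly negative, so the numerical range of $M$ meets both half-lines. You instead expand the quadratic form on $x\oplus y$, note that the vanishing $(2,2)$-block leaves only the linear cross term $2\Re\langle y,Bx\rangle$ in $y$, and send $y=\lambda Bx_0$ with $\lambda\to+\infty$ to make the form positive. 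The two arguments are really the same two-dimensional reduction seen from different angles (your test family lives in the span of $x_0\oplus 0$ and $0\oplus Bx_0$, which is exactly where the paper's compression acts), but yours dispenses with the eigenvalue computation and the appeal to the quadratic numerical range machinery, while the paper's version fits the block-operator-matrix framework it also uses elsewhere (in the proof of Lemma~\ref{lemma4AccQG}) and makes the indefiniteness quantitatively explicit via $\lambda_{\pm}$. Both establish the necessity of $A\leq 0$ the same way, by restricting to vectors of the form $x\oplus 0$.
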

The proof of this lemma makes use of the concept of the quadratic numerical range. For further references on this topic the author highly recommends the book \cite{Tretter}. 

Applying Lemma~\ref{QWB} to the boundary conditions defined by $(A,\,B)$ gives that $\Re(AB^*)+BM_0(\au)B^*\leq 0$ if and only if $PAP^{\perp} \equiv 0$, which is nothing but Assumption~\ref{assumptionA}.
\end{proof}
%A note dealing with this question is in preparation. 

\begin{proof}[Proof of Lemma \ref{QWB}]
The numerical range of $A$ is included in the numerical range of $M$ therefore a necessary condition for $M$ to be negative definite is that $A$ is negative definite. Assume now that $B\neq0$. Then there exists $u$ and $v$ such that $b=\langle Bu,v\rangle\neq 0$. Consider the $2\times 2$--matrix 
\begin{align*}
M(u,v)= \begin{bmatrix} \langle A u,u\rangle & \langle Bu,v\rangle \\ \langle B^*v,u\rangle & 0 \end{bmatrix} = \begin{bmatrix} a & b \\ b^* & 0 \end{bmatrix}
\end{align*}
with $a=\langle A u,u\rangle$, which has the two eigenvalues 
\begin{eqnarray*}
\lambda_+= \frac{1}{2} a + \frac{1}{2} \sqrt{a^2 + 4\abs{b}^2} &\mbox{and} & \lambda_-= \frac{1}{2} a - \frac{1}{2} \sqrt{a^2 + 4\abs{b}^2}.
\end{eqnarray*}
The number $\lambda_{-}$ is negative whereas $\lambda_{+}$ is positive and hence the numerical range of $M$ takes positive as well as negative values. The endpoints of the numerical range are in the spectrum of the self--adjoint operator $M$, which means that $M$ is indefinite. Assuming the other way around that $A\leq 0$ and $B\equiv 0$ the statement follows. For $M\geq 0$ the proof is analogue.
\end{proof}

\textsc{Amru Hussein, FB 08 - Institut f\"{u}r Mathematik,} \\ \textsc{Johannes Gutenberg--Universit\"{a}t Mainz,}\\ \textsc{Staudinger Weg 9, 55099 Mainz, Germany}
\\
\textit{Email adress:} {\fontfamily{pcr}\selectfont hussein@mathematik.uni-mainz.de}


\begin{thebibliography}{10}

\bibitem{Tsekanovski2010}
Y.~Arlinskii, Y.~Kovalev and {\`E}.~R.~Tsekanovski{\u\i}.
\newblock Accretive and Sectorial Extensions of Nonnegative Symmetric Operators.
\newblock {\em Complex Analysis and Operator Theory}, 6:677--718, 2012.

\bibitem{AL2009}
J.~Behrndt and A.~Luger.
\newblock On the number of negative eigenvalues of the {L}aplacian on a metric graph.
\newblock {\em J.\ Phys.\ A}, 43(47):474006,11, 2010.

\bibitem{QG}
G.~Berkolaiko, R.~Carlson,~St.~A.~Fulling and P.~Kuchment, editors.
\newblock {\em Quantum graphs and their applications}.
\newblock volume 415 of {\em Contemp.\ Math.}, American Mathematical Society, Providence, RI, 2006. 

\bibitem{Davies}
E.~B.~Davies, P.~Exner and J.~Lipovsk{\'y}.            
\newblock Non-{W}eyl asymptotics for quantum graphs with general coupling conditions.
\newblock {\em J.\ Phys.\ A}, 43(47):474013,16, 2010.

\bibitem{Tsekanovski1989}
V.~A.~Derkach, M.~M.~Malamud and {\`E}.~R.~Tsekanovski{\u\i}.           
\newblock Sectorial extensions of a positive operator, and the characteristic function.
\newblock {\em Ukrain.\ Mat.\ Zh.}, 41(2):151--158, 286, 1989.

\bibitem{DSIII}
N.~Dunford and J.~T.~Schwartz.
\newblock {\em Linear operators. {P}art {III}: {S}pectral operators}, {\em Pure and Applied Mathematics}, Vol. VII.
\newblock Interscience Publishers, New York-London-Sydney, 1971. 

\bibitem{EngelNagel}
K.-J. Engel and R.~Nagel.
\newblock {\em One-{P}arameter {S}emigroups for {L}inear {E}volution
 {E}quations}, volume 194 of {\em Graduate Texts in Mathematics}.
\newblock Springer-Verlag, New York, 2000.

\bibitem{Hou}
J.~Ch.~Hou.           
\newblock On the spectra of the positive completions for operator matrices.       
\newblock {\em J.\ Operator Theory}, 33(2):299--315, 286, 1995.


\bibitem{Ich}
A.~Hussein.
\newblock {\em Spectral {T}heory of differential operators on finite metric graphs and on bounded domains}.
\newblock PhD thesis, Johannis Gutenberg--Universit\"at Mainz, submission 2013.

\bibitem{Kato}
T.~Kato.
\newblock {\em Perturbation theory for linear operators}, {\em Die Grundlehren der mathematischen Wissenschaften}, Band 132.
\newblock Springer-Verlag, New York, 1966.

\bibitem{VKRS2006}
V.~Kostrykin and R.~Schrader.
\newblock Laplacians on metric graphs: eigenvalues, resolvents and semigroups.
\newblock In {\em Quantum graphs and their applications}, volume 415 of {\em Contemp.\ Math.}, pages 201--225. Amer. Math. Soc., Providence, RI, 2006.


\bibitem{VK2007}
V.~Kostrykin, J.~Potthoff and R.~Schrader.
\newblock Heat kernels on metric graphs and a trace formula.
\newblock In {\em Adventures in mathematical physics}, volume 447 of {\em Contemp.\ Math.}, pages 175--198. Amer. Math. Soc., Providence, RI, 2007.

\bibitem{KPS2012}
V.~Kostrykin, J.~Potthoff and R.~Schrader.
\newblock Brownian motions on metric graphs.
\newblock {\em Journal of Mathematical Physics}, 53(9):095206,36, 2012.

\bibitem{PKWaves}
P.~Kuchment.
\newblock Graph models for waves in thin structures.
\newblock {\em Waves Random Media}, 12(4):R1--R24, 2002.


\bibitem{PKQG1}
P.~Kuchment.
\newblock Quantum graphs {I}: {S}ome basic structures.
\newblock {\em Waves Random Media}, 14:S107--S128, 2004.



\bibitem{Malamud1992}
M.~M.~Malamud.           
\newblock Some classes of extensions of a {H}ermitian operator with lacunae.
\newblock {\em Ukra\"\i \ n.\ Mat.\ Zh.}, 44(2):215--233, 1992.

\bibitem{Tretter}
Ch.~Tretter.
\newblock {\em Spectral theory of block operator matrices and applications}.
\newblock Imperial College Press, London, 2008.


\bibitem{Tsekanovski1992}
{\`E}.~R.~Tsekanovski{\u\i}.
\newblock Accretive extensions and problems on the {S}tieltjes operator-valued functions relations.
\newblock In {\em Operator theory and complex analysis ({S}apporo, 1991)}, volume 59 of {\em Oper.\ Theory\ Adv.\ Appl.}, pages 328--347. Birkh\"auser, Basel, 1992.

\end{thebibliography}
\end{document}